\numberwithin{equation}{section}
\newtheorem{theorem}{Theorem}[section]
\newtheorem{lemma}[theorem]{Lemma}
\newtheorem{proposition}[theorem]{Proposition}
\newtheorem{cor}[theorem]{Corollary}
\theoremstyle{remark}
\newtheorem{remark}[theorem]{Remark}
\title{$G_2$-Poisson equation on homogeneous spheres}
\author{Stepan Hudecek\thanks{School of Mathematics and Physics, The University of Queensland, St Lucia,~QLD 4072, Australia} \\
\small{\texttt{s.hudecek@uq.edu.au}}}
\date{}
\begin{document}

\maketitle

\begin{abstract}
    This paper studies the Poisson equation for the $G_2$-Laplacian on 3-forms on the 7-sphere that are invariant under a transitive group action. We establish the existence and uniqueness of $G$-invariant solutions for $G=SU(4),\: Spin(7),\: (Sp(2)\times Sp(1))/\mathbb{Z}_2$. In the case $G=Sp(2)\times U(1)/\mathbb{Z}_2$, we show that the operator does not preserve the set of positive 3-forms. The paper also discusses the eigenvalue problem for the $G_2$-Laplacian. We classify $G$-invariant solutions for the above choices of~$G$ and determine which of these solutions are nearly parallel $G_2$-structures.
\end{abstract}

\section{Introduction}

A choice of a smooth everywhere positive 3-form $\varphi$ on a 7-dimensional manifold is called a $G_2$-structure. We say that a compact manifold is a $G_2$-manifold if and only if it supports a $G_2$-structure $\varphi$ such that
\begin{equation}\label{G2-Laplace}
    \Delta_\varphi\varphi=0,
\end{equation}
where $\Delta_\varphi$ is the Hodge-Laplace operator associated to the metric induced by $\varphi$.
Some global solutions of~\eqref{G2-Laplace} are known (see, e.g.,~\cite{Joyce-book, nordstrom}), but producing a general theory of such solutions is rather difficult. No invariant solutions exist on homogeneous spaces, because all homogeneous $G_2$-manifolds are Ricci-flat and Ricci-flat homogeneous manifolds are flat. This impedes the analysis of~\eqref{G2-Laplace} substantially since homogeneous spaces typically provide a fruitful testing ground for geometric PDEs; see, e.g.,~\cite{BohmZiller, Bohm2018-cy}.

The problem of constructing $G_2$-manifolds has been approached from several different perspectives. One may study the heat flow of $G_2$-structures associated to the equation \eqref{G2-Laplace}, hoping to establish the convergence of its solutions to forms satisfying~\eqref{G2-Laplace} (see, for example, the survey \cite{Lotay2020}). Note that the flow has also been studied in a homogeneous setting~\cite{Jorge-flow-ofhomogeneous}. Alternatively, one may consider the co-flow, i.e., the flow of the associated 4-form $*\varphi$ (see ~\cite{Grigorian2020}). A weakly parabolic variant of this co-flow, the so-called modified co-flow depending on a parameter~$A$, was introduced in~\cite{GRIGORIAN2013378}. For certain choices of $A$, some of the critical points of the modified co-flow are nearly parallel $G_2$-structures (for the discussion of the parameter see \cite{BEDULLI2020107030}), i.e., $G_2$-structures $\varphi$ satisfying $d\varphi = \tau_0*_\varphi\varphi$ for some function~$\tau_0$. These $G_2$-structures solve the eigenvalue problem
\begin{equation}\label{eigenvalue problem for the G2-Laplacian}
    \Delta_\varphi\varphi = c^2\varphi.
\end{equation}
Equation~\eqref{eigenvalue problem for the G2-Laplacian}, as well as its generalisations, has arisen in~\cite{lotay2025nearlyg2manifoldsg2laplaciancoflows, Jorge-flow-ofhomogeneous, G2-eigenforms}.

In light of these developments, it is natural to investigate the properties of the $G_2$-Laplace operator
\begin{equation*}
    \varphi\mapsto\Delta_{\varphi}\varphi.
\end{equation*}
Particularly, one would like to understand the image of this operator, which amounts to studying the existence of solutions to the Poisson equation
\begin{equation} \label{Poisson equation - general}
    \Delta_\varphi\varphi=\mu.
\end{equation}
It was shown in \cite{Artem_Tim} that local solutions exist for every closed positive form $\mu$. Unlike the Laplace equation~\eqref{G2-Laplace}, the Poisson equation~\eqref{Poisson equation - general} admits homogeneous solutions for invariant right-hand side~$\mu$ in many interesting cases. This creates an opportunity to analyse the $G_2$-Laplace operator in the framework of homogeneous spaces.

Spheres provide a natural starting point for the study of geometric PDEs; cf.~\cite{Ziller1982,BPRZ}. For dimensional reasons, only the 7-sphere, $S^7$, admits $G_2$-structures. However, there are exactly seven groups with free and transitive actions on the 7-sphere, which yield multiple distinct homogeneous structures. These groups are 
\begin{equation*}
    SO(8),\:U(4),\:Spin(7),\:SU(4),\: \faktor{Sp(2)\times Sp(1)}{\mathbb{Z}_2}, \:\faktor{Sp(2)\times U(1)}{\mathbb{Z}_2},\:Sp(2).
\end{equation*}
\sloppy There are no $SO(8)$- or $U(4)$-invariant $G_2$-structures (see Section \ref{Section 4} below). In this paper, we explicitly describe the spaces of $G$-invariant $G_2$-structures on the 7-dimensional sphere for ${G=Spin(7)},{SU(4)},{(Sp(2)\times Sp(1))/\mathbb{Z}_2}$ and ${(Sp(2)\times U(1))/\mathbb{Z}_2}$. For the $Sp(2)$-invariant $G_2$-structures, see \cite{SP2-invariant}.
Building on this analysis, we establish the following result for three of the four cases we consider.
\begin{theorem}\label{theorem}
    Let $G=SU(4),\: Spin(7)$ or $(Sp(2)\times Sp(1))/\mathbb{Z}_2$. Then 
    the $G_2$-Laplacian is an orientation preserving isomorphism on the space of positive $G$-invariant 3-forms on the 7-sphere.
\end{theorem}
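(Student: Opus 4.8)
The plan is, for each of the three groups $G$, to reduce the statement to an elementary analysis of an explicit map between finite-dimensional cones, the point being that ``$G$-invariant $3$-form'' is a finite-dimensional condition. Writing $S^7=G/H$ and decomposing the isotropy module $\mathfrak m$ into $H$-irreducibles, the space $\mathcal V_G:=(\Lambda^3\mathfrak m^*)^H$ of $G$-invariant $3$-forms is finite-dimensional (of dimensions $1$, $3$ and $2$ for $Spin(7)$, $SU(4)$ and $(Sp(2)\times Sp(1))/\mathbb{Z}_2$ respectively), with a distinguished basis coming from the invariant tensors of the model geometry: the $G_2$-form $\varphi_0$ itself in the $Spin(7)$ case; the contact form $\eta$, the transverse Kähler form $\omega$ and the transverse holomorphic volume form $\Omega$ of the Sasaki--Einstein structure in the $SU(4)$ case, so that $\varphi=x\,\eta\wedge\omega+\operatorname{Re}(z\,\Omega)$ with $(x,z)\in\mathbb{R}\times\mathbb{C}$; and the three contact forms $\eta_i$ together with $\sum_i\eta_i\wedge\omega_i$ of the $3$-Sasakian structure in the last case. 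Inside $\mathcal V_G$ sits the open subcone $\mathcal P_G$ of positive $3$-forms, with finitely many components; the involution $\varphi\mapsto-\varphi$ interchanges the two orientations of $S^7$. Since $d$ maps invariant forms to invariant forms and the Hodge star of any invariant metric does likewise, $\Phi(\varphi):=\Delta_\varphi\varphi=d\,d_\varphi^*\varphi+d_\varphi^* d\varphi$ restricts to a smooth map $\Phi\colon\mathcal P_G\to\mathcal V_G$, and the theorem is the assertion that $\Phi$ is a diffeomorphism of $\mathcal P_G$ onto itself taking each orientation class to itself.

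Two reductions shrink this to a one-variable problem. First, $\Phi$ is homogeneous of degree $1/3$: from $g_{t\varphi}=|t|^{2/3}g_\varphi$, the scaling $\Delta_{c\,g}=c^{-1}\Delta_g$ of the Hodge Laplacian, and the fact that $\Delta$ does not see the orientation, one gets $\Phi(t\varphi)=t^{1/3}\Phi(\varphi)$ for all $t\in\mathbb{R}\setminus\{0\}$ (real cube root). Second, in the $SU(4)$ case the Hopf circle action on $S^7$ (the Reeb flow of the Sasaki--Einstein structure) commutes with $SU(4)$ and rotates $(\operatorname{Re}\Omega,\operatorname{Im}\Omega)$ while fixing $\eta\wedge\omega$; since $\varphi\mapsto g_\varphi$ is natural, $\Phi$ is equivariant for this action. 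Quotienting $\mathcal P_G$ by dilations (and, for $SU(4)$, by the Hopf circle) leaves a one-parameter family of ``shapes'' of positive invariant $3$-forms --- an open subset $I_G$ of a one-dimensional manifold, a point in the $Spin(7)$ case --- and it suffices to show that the induced map $\overline\Phi\colon I_G\to I_G$ is a homeomorphism fixing each component: the radial factor is then automatically handled because $r\mapsto c\,r^{1/3}$ is a bijection of $\mathbb{R}_{>0}$ for every $c>0$, and the $\varphi\mapsto-\varphi$ symmetry reduces us to a single orientation.

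To compute $\overline\Phi$ one uses the structure equations of the model geometry, which close up the algebra generated by the basis forms under $d$ and $\wedge$ (for Sasaki--Einstein, equations of the shape $d\eta=2\omega$, $d\omega=0$, $d\Omega=c\,i\,\eta\wedge\Omega$, and their $3$-Sasakian analogues): given a positive $\varphi$ one solves the nonlinear algebraic equations extracting the metric $g_\varphi$ and the orientation from $\varphi$, then computes $*_\varphi\varphi$, $d_\varphi^*\varphi$, $d\varphi$, $d_\varphi^*d\varphi$, $d\,d_\varphi^*\varphi$, and re-expands $\Phi(\varphi)$ in the distinguished basis. The $Spin(7)$ case is then immediate: invariance forces the torsion of $\varphi_0$ into the nearly parallel type, $d\varphi_0=\tau_0*_0\varphi_0$ and $d*_0\varphi_0=0$ with $\tau_0\neq 0$ (else $\varphi_0$ would be torsion-free and $S^7$ flat), whence $\Delta_{\varphi_0}\varphi_0=\tau_0^2\varphi_0$, and homogeneity upgrades this to $\Phi(\lambda\varphi_0)=\tau_0^2\,\lambda^{1/3}\varphi_0$, manifestly an orientation-preserving diffeomorphism of $\mathbb{R}\setminus\{0\}$. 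For $SU(4)$ and $(Sp(2)\times Sp(1))/\mathbb{Z}_2$ one obtains an explicit (rational) formula for $\overline\Phi$ on $I_G$, and it remains to check: (i) $\Phi$ maps $\mathcal P_G$ into $\mathcal P_G$, i.e.\ positivity and the orientation survive; (ii) $\overline\Phi$ is strictly monotone; and (iii) $\overline\Phi$ has the correct limiting behaviour at the two ends of $I_G$, so that $\overline\Phi(I_G)=I_G$. Given (i)--(iii), $\overline\Phi$ is a homeomorphism of $I_G$, and since the formulas are smooth with non-vanishing radial factor, $\Phi^{-1}$ is smooth as well (the origin, where homogeneity degenerates, does not lie in $\mathcal P_G$), completing the proof.

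The real content, and the main obstacle, is the computation behind (i)--(iii). Determining the metric of a general positive invariant $3$-form is genuinely nonlinear, and in the mixed basis $\{\eta\wedge\omega,\operatorname{Re}\Omega,\operatorname{Im}\Omega\}$ the ``stable form $\mapsto$ metric'' map has no transparent closed form, so producing $\overline\Phi$ and then proving it is monotone with the right endpoint limits comes down to a concrete but delicate inequality. Moreover (i) is not a formality: the parallel computation for $(Sp(2)\times U(1))/\mathbb{Z}_2$ shows that $\Phi$ need \emph{not} carry positive forms to positive forms, so the argument must use features special to the three groups at hand --- in effect the Sasaki--Einstein, $3$-Sasakian, and nearly parallel normalisations of their model geometries --- and it is here that the three cases genuinely part company from the fourth.
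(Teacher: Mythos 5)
Your strategy coincides with the paper's: reduce to the finite-dimensional space of invariant forms, exploit the $t^{1/3}$-homogeneity of $\varphi\mapsto\Delta_\varphi\varphi$ to project to rays, use the circle equivariance (rotation of $(\operatorname{Re}\Omega,\operatorname{Im}\Omega)$) to kill one parameter in the $SU(4)$ case, and then verify that the induced one-variable map preserves positivity, is monotone, and has the right limits. The $Spin(7)$ case is complete as you state it, including the observation that the torsion cannot vanish (the paper leaves $K\neq 0$ implicit). The difficulty is that for $SU(4)$ and $(Sp(2)\times Sp(1))/\mathbb{Z}_2$ you reduce the theorem to items (i)--(iii) and then stop, explicitly conceding that ``the real content, and the main obstacle, is the computation behind (i)--(iii).'' As written, this is a proof plan rather than a proof: nothing in the outline certifies that the ``delicate inequality'' actually holds, and your own remark about $(Sp(2)\times U(1))/\mathbb{Z}_2$ shows that (i) can genuinely fail for a structurally similar symmetry group, so none of (i)--(iii) can be waved through on general grounds.

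What is missing is exactly what the paper supplies. One must solve the nonlinear relation $\frac16\iota_X\varphi\wedge\iota_Y\varphi\wedge\varphi=g_\varphi(X,Y)\operatorname{dvol}_{g_\varphi}$ in the chosen basis to get the induced metric in closed form (e.g.\ $g_\varphi=A^6R^{-4}\,e^1\otimes e^1+R^2\sum_{i\ge 2}e^i\otimes e^i$ for $SU(4)$), read off the diagonal Hodge star, compute the structure constants, and obtain the Laplacian explicitly. For $SU(4)$ this gives $(A,R)\mapsto(\tfrac{16}{3}A^9R^{-8},\,144A^{-6}R^7)$, whose first component has the sign of $A$ and which admits an explicit smooth inverse, settling (i)--(iii) at once. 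For $(Sp(2)\times Sp(1))/\mathbb{Z}_2$ one gets $(X,Y)=(24A^7B^{-6}+24A^4B^{-3},\,4A+24A^4B^{-3}+4A^{-2}B^3)$; parametrising rays by $t$ one must then check that $Y/X=t(t^2+t+6)/(6(t+1))$ is strictly increasing on $(0,\infty)$ with limits $0$ and $\infty$. These verifications are short once the formulas are in hand, but they are the proof; without them the argument for two of the three groups is not established.
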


\sloppy As a corollary, we obtain the existence and uniqueness of invariant solutions to the \mbox{$G_2$-Poisson} equation for invariant right-hand sides for the aforementioned $G$.
In the remaining case of ${G=(Sp(2)\times U(1))/\mathbb{Z}_2}$, we prove a similar existence result in a neighbourhood of 3-forms with $(Sp(2)\times Sp(1))/\mathbb{Z}_2$-symmetry; see Theorem~\ref{thm for Sp(2)U(1)}. Furthermore, we are able to show that the image of the $G_2$-Laplacian does not consist of only positive 3-forms. This implies the existence of a global solution to the Poisson equation with everywhere degenerate right-hand side. Thus, a direct extension of Theorem~\ref{theorem} to this case is not possible. To further advance our understanding of the $(Sp(2)\times Sp(1))/\mathbb{Z}_2$-invariant case, we plot the image of the $G_2$-Laplacian with $(Sp(2)\times U(1))/\mathbb{Z}_2$ symmetry using \texttt{Matplotlib}. Moreover, we establish numerically the non-uniqueness of solutions to~\eqref{Poisson equation - general}. This demonstrates that the uniqueness portion of Theorem~\ref{theorem} fails for the $(Sp(2)\times U(1))/\mathbb{Z}_2$-symmetry as well.

Next, we classify the solutions to the eigenvalue problem~\eqref{eigenvalue problem for the G2-Laplacian} for each of the symmetries we consider. All of the solutions are coclosed forms, and for the symmetries $G=SU(4),\: Spin(7)$ and ${(Sp(2)\times Sp(1))/\mathbb{Z}_2}$ they are exactly the nearly parallel $G_2$-structures. For the $SU(4)$ symmetry, they form a continuous 1-dimensional family and for $Spin(7)$ and  $(Sp(2)\times Sp(1))/\mathbb{Z}_2$ they are a discrete subset up to scaling. However, for $(Sp(2)\times U(1))/\mathbb{Z}_2$ one obtains, up to scaling, a discrete subset of nearly parallel $G_2$-structures and a continuous family of eigenvectors that are coclosed but are not nearly parallel. 

The paper is organised as follows. Section~2 establishes preliminaries about $G_2$-structures and introduces our notation.
In Section~3, we describe spaces of $G$-invariant $G_2$-structures and prove a technical lemma about orthogonal bases for different Hodge stars.
Section~4 focuses on homogeneous structures of the sphere $S^7$ and decompositions of the corresponding isotropy representations.
Section~5 is devoted to the explicit constructions and analysis of $G$-invariant $G_2$-structures. In Propositions \ref{positive 3-forms for Spin(7)}, \ref{positive 3-forms for SU(4)}, \ref{positive 3-forms for Sp(2)U(1)} and \ref{positive 3-forms for Sp(2)Sp(1)} we characterise positive $G$-invariant 3-forms for each~$G$, and we prove Theorem \ref{theorem} as a consequence of Theorems \ref{thm for Spin(7)}, \ref{thm for SU(4)} and \ref{thm for Sp(2)Sp(1)}. The same Theorems together with \ref{thm:fixed points for Sp(2)Sp(1)} classify the solutions to the eigenvalue problems (see Remark \ref{relation between eigenvalue and fixed points} on how they relate). In the last section we describe the numerical experiments we have done.
\vspace{10pt}

\noindent\textbf{Acknowledgements.} The author would like to thank his advisors, Artem Pulemotov and Timothy Buttsworth, for their support during the project, and Kyle Broder and Ramiro Lafuente for the helpful discussions.

\section{Preliminaries}

We use the definition of $G_2$ given in \cite{Bryant}. For a general 3-form $\varphi\in\Lambda^3\mathbb{R}^7$ on a real 7-dimensional vector space, we consider the relation
\begin{equation}\label{defining equation for G2}
    \frac{1}{6}\iota_X\varphi\wedge\iota_Y\varphi\wedge\varphi=g(X,Y)\operatorname{dvol}_g,
\end{equation}
where $g$ is a symmetric bilinear form, $\iota$ is the interior product and 
$\operatorname{dvol}_g$ is a top form associated to the bilinear form $g$ and an orientation, that is, $\operatorname{dvol}_g = \pm\sqrt{|\det(g_{ij})|}e^1\wedge\ldots\wedge e^n$ for an orthonormal basis $e^1,\ldots, e^7$.
If there is non-zero $X\in\mathbb{R}^7$ such that the left-hand side vanishes for $Y=X$, we say that the 3-form is degenerate; otherwise, formula \eqref{defining equation for G2} defines a unique (non-degenerate) symmetric bilinear form $g\in S^2(\mathbb{R}^7)$ with signature $(7,0)$ or signature $(3,4)$. If it is positive definite, then $g$ is an inner product, denoted by $g_\varphi$, and we say that the 3-form $\varphi$ is positive.
Furthermore, stabilisers of positive 3-forms under the natural action of $GL(7)$ on $\mathbb{R}^7$ are conjugate to each other. We define the group $G_2$ to be the stabiliser of the 3-form
\begin{equation*}
    \varphi_0=e^{123}+e^{145}+e^{167}+e^{246}-e^{257}-e^{347}-e^{356},
\end{equation*}
where $e^{ijk}=e^i\wedge e^j\wedge e^k$. Since any positive 3-form $\varphi$ induces a metric and a volume form, $G_2\leq SO(7)$, we have a Hodge star operator $*_\varphi$ at our disposal.
\begin{remark}\label{bundle of G2 structures}
    There is a sequence of homomorphisms of Lie groups $G_2\to SO(7)\to SL(7)$
    that induces a fibre bundle
    \begin{equation*}
        \faktor{SO(7)}{G_2}\to \faktor{SL(7)}{G_2}\to \faktor{SL(7)}{SO(7)},
    \end{equation*}
    where $SO(7)/G_2$ is the space of all positive 3-forms inducing the same metric with a fixed volume form, $SL(7)/G_2$ is the space of all positive 3-forms inducing the same volume form and $SL(7)/SO(7)$ is the space of all metrics with a fixed volume form, respectively.
    Since the space of metrics is contractible, this is a trivial bundle
    \begin{equation*}
        \faktor{SL(7)}{G_2}\cong\faktor{SO(7)}{G_2}\times \faktor{SL(7)}{SO(7)}\cong\mathbb{R}P^7\times\faktor{SL(7)}{SO(7)}.
    \end{equation*}
    For a given subgroup $G\leq G_2$, we obtain a similar fibre bundle
    \begin{equation*}
        \Big(\faktor{SO(7)}{G_2}\Big)^G\to \Big(\faktor{SL(7)}{G_2}\Big)^G\to \Big(\faktor{SL(7)}{SO(7)}\Big)^G,
    \end{equation*}
    where the superscript $^G$ denotes only those tensors that are $G$-invariant. Again, this bundle is trivial. Altogether we obtain the decomposition for the space of all $G$-invariant $G_2$-structures
    \begin{equation}\label{decomposition of the space of G2-structures}
        \left(\faktor{GL(7)}{G_2}\right)^G\cong
        \mathbb{R}^*\times\left(\faktor{SL(7)}{G_2}\right)^G\cong
        \mathbb{Z}_2 \times \mathbb{R}^+ \times\left(\faktor{SL(7)}{SO(7)}\right)^G \times \left(\faktor{SO(7)}{G_2}\right)^G  ,
    \end{equation}
    where in the first isomorphism we took out the scaling factor and in the second we used the observations above and further decomposed the scaling into a positive scaling and a sign. The factors correspond to choosing an orientation, a volume form (with the same orientation), a metric (with the fixed volume form) and $G_2$-structure (giving the same metric).
\end{remark}
We call a smooth everywhere positive 3-form on a manifold a $G_2$-structure. A manifold admits a $G_2$-structure if and only if it is orientable and spinnable, i.e., the first two Stiefel-Whitney classes vanish~\cite{Gray-G_2}.
In this case, the space of $G_2$-structures maps onto the space of $SO(7)$-structures (metrics and orientations).

\sloppy On a manifold with $G_2$-structure $\varphi$, one may define the Hodge Laplace operator by ${\Delta_\varphi = (d+\delta_\varphi)^2}$, where $d$ is the exterior derivative and $\delta_\varphi$ is its dual, the codifferential, defined on $k$-forms by ${\delta_\varphi=(-1)^k*_\varphi d*_\varphi}$.
In particular, for 3-forms we obtain $\Delta_\varphi = *_\varphi d*_\varphi d-d*_\varphi d*_\varphi$.
We refer to the map $\varphi\mapsto\Delta_\varphi\varphi$ as the $G_2$-Laplacian; it is not a linear operator due to its dependence on the $G_2$-structure. We include the proof of its scaling properties for the convenience of the reader.

\vspace{2mm}

\begin{lemma}\label{scaling of G2 Laplacian}
The $G_2$-Laplacian satisfies
\begin{equation*}
    \Delta_{c\varphi}c\varphi = c^{\frac{1}{3}}\Delta_\varphi \varphi
\end{equation*}
for $c\in\mathbb{R}\backslash\{0\}$.
\end{lemma}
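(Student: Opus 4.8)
The plan is to determine how the metric, orientation and Hodge star attached to a positive $3$-form change under the substitution $\varphi\mapsto c\varphi$, and then to feed this into the explicit expression $\Delta_\varphi=*_\varphi d*_\varphi d-d*_\varphi d*_\varphi$ on $3$-forms. It is convenient to treat positive scalings and the sign separately.

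First take $c>0$. The left-hand side of~\eqref{defining equation for G2} is homogeneous of degree $3$ in $\varphi$, so it is multiplied by $c^3$ when $\varphi$ is replaced by $c\varphi$. Writing $g_{c\varphi}=\lambda g_\varphi$ with $\lambda>0$ and using that a conformal rescaling by $\lambda$ multiplies the volume form by $\lambda^{7/2}$, comparison of the two sides of~\eqref{defining equation for G2} gives $\lambda^{9/2}=c^3$ and (since $c^3>0$) no change of orientation, so
\begin{equation*}
    g_{c\varphi}=c^{2/3}g_\varphi,\qquad \operatorname{dvol}_{g_{c\varphi}}=c^{7/3}\operatorname{dvol}_{g_\varphi}.
\end{equation*}
From the defining relation $\alpha\wedge*_g\beta=\langle\alpha,\beta\rangle_g\operatorname{dvol}_g$ for $\alpha,\beta\in\Lambda^k$, a rescaling $g\mapsto\lambda g$ multiplies the induced inner product on $k$-forms by $\lambda^{-k}$ and the volume form by $\lambda^{7/2}$, whence $*_{c\varphi}|_{\Lambda^k}=c^{7/3-2k/3}\,*_\varphi|_{\Lambda^k}$; in particular the factors on $\Lambda^3,\Lambda^4,\Lambda^5$ are $c^{1/3},c^{-1/3},c^{-1}$.

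Now propagate these factors through $\Delta_{c\varphi}(c\varphi)$, using that $d$ does not depend on the metric. In the first term $*_{c\varphi}d*_{c\varphi}d(c\varphi)$ the two Hodge stars act on $4$-forms, producing the factor $c\cdot c^{-1/3}\cdot c^{-1/3}=c^{1/3}$; in the second term $d*_{c\varphi}d*_{c\varphi}(c\varphi)$ they act on a $3$-form and a $5$-form, producing $c\cdot c^{1/3}\cdot c^{-1}=c^{1/3}$. Hence $\Delta_{c\varphi}(c\varphi)=c^{1/3}\Delta_\varphi\varphi$ for $c>0$. For the sign, note that $-\varphi$ is again positive (its $GL(7)$-stabiliser equals that of $\varphi$; cf.\ the $\mathbb{Z}_2$ factor in~\eqref{decomposition of the space of G2-structures}) and induces the same metric with the opposite orientation, so $*_{-\varphi}=-*_\varphi$; since $*$ occurs an even number of times in each term of $\Delta_\psi$ on $3$-forms, we get $\Delta_{-\varphi}=\Delta_\varphi$ as operators, and thus $\Delta_{-\varphi}(-\varphi)=-\Delta_\varphi\varphi$. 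Writing a general $c\neq0$ as $\operatorname{sign}(c)\,|c|$ and combining the two computations (the sign step applied to $|c|\varphi$) yields $\Delta_{c\varphi}(c\varphi)=\operatorname{sign}(c)\,|c|^{1/3}\Delta_\varphi\varphi=c^{1/3}\Delta_\varphi\varphi$ for all $c\neq0$.

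The whole argument is bookkeeping; the only point requiring care is the sign change, namely checking that $c\varphi$ stays positive, that the induced orientation reverses for $c<0$, and that the two sign factors contributed by the Hodge stars in each term cancel, so that the real cube root $c^{1/3}$ (and not $|c|^{1/3}$) appears.
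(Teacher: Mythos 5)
Your proof is correct and follows essentially the same route as the paper: derive $g_{c\varphi}=c^{2/3}g_\varphi$ from the cubic homogeneity of \eqref{defining equation for G2}, deduce the factor $c^{(7-2k)/3}$ for $*_{c\varphi}$ on $k$-forms, and push these factors through $\Delta_\varphi=*_\varphi d*_\varphi d-d*_\varphi d*_\varphi$. The only difference is that you treat the sign ($c<0$) separately and verify the orientation/Hodge-star sign cancellation explicitly, which the paper's proof passes over silently; this is a welcome refinement rather than a different argument.
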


\begin{proof}
The metric is induced via equation \eqref{defining equation for G2}.
If we now scale $\varphi$ to $c\varphi$, we obtain
\begin{equation*}
    g_{c\varphi}(X,Y) \text{dvol}_{g_{c\varphi}} = 
    \frac{1}{6}\iota_X c\varphi\wedge\iota_Y c\varphi\wedge c\varphi = \frac{1}{6}c^3\iota_X\varphi\wedge\iota_Y\varphi\wedge\varphi = c^3 g_\varphi(X,Y) \text{dvol}_{g_\varphi}.
\end{equation*}
Note that $\text{dvol}_{cg} = c^{\frac{7}{2}}\text{dvol}_g$, which implies that $g_{c\varphi} = c^{\frac{2}{3}}g_\varphi$.
Combining with the formula for Hodge star $*_{cg} = c^{\frac{7-2k}{2}}*_g$, we find that on $k$-forms
\begin{equation*}
    *_{g_{c\varphi}} = *_{c^\frac{2}{3}g_\varphi} = c^{\frac{2}{3}\frac{7-2k}{2}}*_{g_\varphi}= c^\frac{7-2k}{3}*_{g_\varphi}.
\end{equation*}
For the $G_2$-Laplacian,
\begin{equation*}
    \Delta_{c\varphi}c\varphi = (*_{c\varphi}d*_{c\varphi}d - d*_{c\varphi}d*_{c\varphi})c\varphi = (c^{-\frac{1}{3}}c^{-\frac{1}{3}}*_\varphi d*_\varphi d - c^{-1} c^\frac{1}{3}d*_\varphi d*_\varphi)c\varphi = c^\frac{1}{3}\Delta_\varphi\varphi.
\end{equation*}
\end{proof}

\begin{remark}\label{eigenvalue problem - rmk about lambda}
    Even though the $G_2$-Laplacian is non-linear, we can consider the eigenvalue problem
    \begin{equation*}
        \Delta_\varphi\varphi = \lambda\varphi.
    \end{equation*}
    Of course, because of the scaling properties, the absolute value of $\lambda$ is of no interest; however its sign and the eigenform $\varphi$ itself are. 
    On a compact manifold, a solution $\varphi$ to the eigenvalue problem can only ever exist for a non-negative $\lambda$ since $\Delta_\varphi$ is a self-adjoint positive semi-definite operator (with respect to the metric induced by $\varphi$).
\end{remark}

\begin{remark}\label{fixed points}
    Lemma \ref{scaling of G2 Laplacian} immediately implies that the $G_2$-Laplacian preserves lines passing through the origin, i.e., it can be considered as an operator on the projectivisation $\Delta_-(-): \mathbb{P}\Omega^3(M) \to \mathbb{P}\Omega^3(M)$. One implication of this is that, whenever there is a fixed point for the $G_2$-Laplacian, there is a line that is mapped onto itself, i.e., a fixed point in the projective space. Conversely, for each line that is mapped onto itself there are exactly two fixed points, one for each orientation.
\end{remark}

We say that a compact manifold is a $G_2$-manifold if it supports a $G_2$-structure $\varphi$ such that $\Delta_\varphi\varphi=0$ (see \cite{Bryant}). These manifolds have their holonomy inside $G_2$.

\section{Homogeneous $G_2$-structures}
For a homogeneous space $G/H$, where $G$ is a compact Lie group and $H$ is its closed subgroup, there is an $Ad_H$-invariant decomposition at the level of Lie algebrae, i.e., $\mathfrak{g} = \mathfrak{h} \oplus \mathfrak{m}$.
The space of $G$-invariant 3-forms on the manifold is then isomorphic to the space of $Ad_H$-invariant 3-forms on the vector space $\mathfrak{m}$, i.e., $\Omega^3(G/H)^G \cong (\Lambda^3\mathfrak{m}^*)^H$. It follows that a homogeneous manifold admits a $G$-invariant $G_2$-structure if and only if $Ad(H)\leq G_2$, where we interpret $Ad$ as a map from $H$ to $GL(\mathfrak{m})$.

Since the space of possible $G_2$-structures is open in the space of 3-forms, it follows that if $G/H$ admits a $G$-invariant $G_2$-structure, the dimension of the space of positive 3-forms is the same as the dimension of the space of all $H$-invariant 3-forms.
Consider the decomposition of $\Lambda^3(\mathbb{R}^7)$ under the natural action of $G_2\leq GL(7,\mathbb{R})$ into irreducible representations $\Lambda^3(\mathbb{R}^7) = \Lambda^3_{27}(\mathbb{R}^7) \oplus \Lambda^3_7(\mathbb{R}^7) \oplus \Lambda^3_1(\mathbb{R}^7)$. As was shown in~\cite{bryant2025remarksg2structures}, there are $G_2$-equivariant isomorphisms
\begin{align*}
    \Lambda^3_{27}(\mathbb{R}^7) &\cong S^2_0(\mathbb{R}^7),\\
    \Lambda^3_7(\mathbb{R}^7) &\cong \Lambda^1(\mathbb{R}^7),
\end{align*}
where $S^2_0$ denotes the space of traceless symmetric 2-tensors. From these isomorphisms, we can see that the space of $G$-invariant $G_2$-structures on $G/H$ is
\begin{equation} \label{number of G2 structures}
\begin{split}
    \Omega^3\Big(\faktor{G}{H}\Big)^G &\cong \Lambda^3(\mathfrak{m}^*)^H \cong \Lambda^3_{27}(\mathfrak{m}^*)^H \oplus \Lambda^3_7(\mathfrak{m}^*)^H \oplus \Lambda^3_1(\mathfrak{m}^*)^H \cong\\
    &\cong S^2_0(\mathfrak{m}^*)^H \oplus \Lambda^1(\mathfrak{m}^*)^H \oplus \Lambda^3_1(\mathfrak{m}^*)^H.
\end{split}
\end{equation}
Therefore, the dimension of the space of $G$-invariant 3-forms on $G/H$ is the same as the dimension of the space of all $H$-invariant metrics plus the dimension of the space of $H$-invariant vector fields on $\mathfrak{m}$. In a special case where there are no equivalent summands in the isotropy representation and none of them is a trivial summand, the dimension of the space of all possible $H$-invariant $G_2$-structures is the number of irreducible summands. And if there is a trivial summand, it is the number of irreducible summands plus one.

\begin{remark}
  The condition on $G/H$ to admit a $G_2$-structure is essential. Consider a homogeneous 7-sphere $S^7\cong U(4)/U(3)$. It admits $U(3)$-invariant 3-forms since there is a $U(3)$-invariant 1-form (trivial component in the isotropy representation) and a K\"ahler form (because of the standard representation of $U(3)$), but there are no $G_2$-structures since $U(3)$ is not a subgroup of $G_2$.  
\end{remark}
There is a similar decomposition (see \cite{bryant2025remarksg2structures}) for the 2-forms under the action of $G_2$,
\begin{equation*}
    \Lambda^2(\mathbb{R}^7) = \Lambda^2_7(\mathbb{R}^7)\oplus\Lambda^2_{14}(\mathbb{R}^7),
\end{equation*}
where the factor $\Lambda^2_{14}(\mathbb{R}^7)$ corresponds to the adjoint representation of the group $G_2$ on its Lie algebra $\mathfrak{g}_2$ and $\Lambda^2_7(\mathbb{R}^7)\cong\Lambda^1(\mathbb{R}^7)$, as in the case of 3-forms. Passing to $H$-invariant forms, we obtain
\begin{equation}\label{number of 2-forms}
    \Omega^2\Big(\faktor{G}{H}\Big)^G \cong \Lambda^2(\mathfrak{m}^*)^H\cong\Lambda^1(\mathfrak{m}^*)^H\oplus(\mathfrak{g}_2)^H.
\end{equation}
Therefore, the dimension of the space of $G$-invariant 2-forms on $G/H$ is the same as the dimension of the space of all $H$-invariant vector fields plus the number of trivial summands in the decomposition of the Lie algebra $\mathfrak{g}_2$ considered as the representation of $H\leq G_2$.

\vspace{2mm}

We will need the following technical lemma.

\begin{lemma} \label{Invariant basis for * - lemma}
    Let $\mathfrak{m} = \mathfrak{m}_1 \oplus \ldots \oplus \mathfrak{m}_k$ be a decomposition of a representation of a Lie group $H$ into irreducible representations $\mathfrak{m}_i$. If there are no equivalent summands, then for every $l\in \mathbb{N}$ there exist bases $\beta_l$ and $\beta^*_l$ of $\Lambda^l(\mathfrak{m})^H$ and $\Lambda^{n-l}(\mathfrak{m})^H$ respectively such that for each $H$-invariant inner product on $\mathfrak{m}$ the associated Hodge star $*:\Lambda^l(\mathfrak{m})^H \to \Lambda^{n-l}(\mathfrak{m})^H$ has diagonal matrix with respect to these bases.
\end{lemma}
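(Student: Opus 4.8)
The plan is to identify all $H$-invariant inner products on $\mathfrak m$ — they form a very rigid family — and then to show that the Hodge star of any one of them differs from that of a fixed reference metric only by a scalar on each piece of a natural $H$-invariant decomposition of $\Lambda^l\mathfrak m$. First I would fix one $H$-invariant inner product $g_0$ and describe the rest. Since no two $\mathfrak m_i$ are equivalent, Schur's lemma makes the decomposition $\mathfrak m=\mathfrak m_1\oplus\dots\oplus\mathfrak m_k$ $g_0$-orthogonal and shows that any other invariant inner product is $g=g_0(A\,\cdot\,,\,\cdot\,)$ for a $g_0$-self-adjoint, positive-definite, $H$-equivariant $A$ which, again by Schur, preserves every $\mathfrak m_i$. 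Because the self-adjoint $H$-equivariant endomorphisms of a real irreducible representation are exactly the real scalars (the remaining invariant endomorphisms come from a $\mathbb C$- or $\mathbb H$-structure and are anti-self-adjoint), $A|_{\mathfrak m_i}=c_i\operatorname{Id}$ with $c_i>0$, so every $H$-invariant inner product is $g=\bigoplus_i c_i\,g_0|_{\mathfrak m_i}$, parametrised by $(c_1,\dots,c_k)\in(\mathbb R^+)^k$.

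Next I would exploit the $H$-equivariant decomposition
\[
\Lambda^l\mathfrak m\;\cong\;\bigoplus_{p_1+\dots+p_k=l}\ \Lambda^{p_1}\mathfrak m_1\otimes\dots\otimes\Lambda^{p_k}\mathfrak m_k ,
\]
together with its degree-$(n-l)$ analogue (here $n=\dim\mathfrak m$); passing to $H$-invariants commutes with these sums. Working in a $g_0$-orthonormal basis adapted to the $\mathfrak m_i$, one checks that $*_{g_0}$ is $H$-equivariant and carries the summand of degree distribution $p=(p_1,\dots,p_k)$ isomorphically onto the one of complementary distribution $(\dim\mathfrak m_i-p_i)_i$, and that for $g=\bigoplus_i c_i\,g_0|_{\mathfrak m_i}$
\[
*_g\big|_{\mathrm{deg}\,p}\;=\;\Big(\textstyle\prod_i c_i^{\,p_i-\frac12\dim\mathfrak m_i}\Big)\,*_{g_0}\big|_{\mathrm{deg}\,p}
\]
(replace $c_i$ by $c_i^{-1}$ if one prefers to work with $\mathfrak m^*$). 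The essential point is that the prefactor is a single scalar, the same for the whole summand and independent of which invariant element is inserted.

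Finally I would assemble the bases: for each $p$ with $p_1+\dots+p_k=l$ pick any basis $\gamma_p$ of $(\Lambda^{p_1}\mathfrak m_1\otimes\dots\otimes\Lambda^{p_k}\mathfrak m_k)^H$, take $\beta_l$ to be the union of the $\gamma_p$, set $\gamma_p^*:=*_{g_0}(\gamma_p)$, and take $\beta_l^*$ to be the union of the $\gamma_p^*$. As $*_{g_0}$ restricts to an isomorphism $\Lambda^l(\mathfrak m)^H\to\Lambda^{n-l}(\mathfrak m)^H$ carrying each degree-$p$ summand onto the complementary one, and $p\mapsto(\dim\mathfrak m_i-p_i)_i$ is a bijection between distributions of total degree $l$ and of total degree $n-l$, these $\beta_l,\beta_l^*$ are bases. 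By the previous step, for every $H$-invariant inner product $g$ the operator $*_g$ sends each element of $\beta_l$ to a scalar multiple of its partner in $\beta_l^*$, the scalar depending only on the degree distribution and on $g$; hence its matrix in these bases is diagonal. I expect the only genuine obstacle to be the first step: the rest is bookkeeping with exterior powers and the scaling of the Hodge star, but diagonalisability really does rest on knowing that the $H$-invariant inner products are exactly the independent rescalings of the $\mathfrak m_i$ — which is where the hypothesis of no equivalent summands (equivalently, a multiplicity-one isotypic decomposition) and the $\mathbb R/\mathbb C/\mathbb H$ trichotomy for $\operatorname{End}_H$ of a real irreducible are used.
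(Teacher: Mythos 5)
Your proposal is correct and follows essentially the same route as the paper's proof: classify the $H$-invariant inner products via Schur's lemma as independent rescalings of the inequivalent irreducible summands, decompose $\Lambda^l\mathfrak{m}$ by degree distributions over the $\mathfrak{m}_i$, observe that any invariant Hodge star acts on each such summand as a scalar multiple of a fixed reference Hodge star, and take $\beta_l^*$ to be the reference star of $\beta_l$. The only differences are cosmetic (the paper picks orthonormal bases of the invariant summands and states the scaling with the dual-space sign convention, while you spell out the self-adjointness point behind Schur's lemma), and neither affects the argument.
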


\begin{proof}
    Consider the decomposition    
    \begin{equation}\label{decomposition}
        \Lambda^l(\mathfrak{m})= \bigoplus_{l_1+\ldots +l_k=k}\Lambda^{l_1}(\mathfrak{m}_1)\wedge\ldots\wedge \Lambda^{l_k}(\mathfrak{m}_k) =: \bigoplus_{j=1}^p W_j.
    \end{equation}
    Note that $W_j$ are $H$-invariant subspaces for all $j$, i.e., $H\cdot W_j=W_j$. Take $w \in \Lambda^l(\mathfrak{m})^H\subseteq\Lambda^l(\mathfrak{m})$ and write it as $w=w_1+\ldots +w_p$, where $w_j\in W_j$ as in the decomposition \eqref{decomposition}. Note that 
    \begin{equation*}
        w_1+\ldots +w_p = w= h\cdot w = h\cdot w_1 +  \ldots + h\cdot w_p,
    \end{equation*}
    but since $h\cdot w_j\in H\cdot W_j = W_j$, we necessarily have $h\cdot w_j = w_j$ because the decomposition is unique. Thus we have shown that
    \begin{equation*}
        \Lambda^l(\mathfrak{m})^H= \bigoplus_{l_1+\ldots l_k=k}\big(\Lambda^{l_1}(\mathfrak{m}_1)\wedge\ldots\wedge \Lambda^{l_k}(\mathfrak{m}_k)\big)^H = \bigoplus_{j=1}^p W_j^H.
    \end{equation*}
    Let us now fix $H$-invariant inner products $g_i$ on each irreducible component $\mathfrak{m}_i$. By Schur's lemma, they are unique up to scaling.
    Using Schur's lemma again and the inequivalence of the summands, we find that all $H$-invariant inner products on $\mathfrak{m}$ are of the form $g=a_1 g_1 \oplus \ldots \oplus a_k g_k$, where $a_i$ are positive real numbers.
    This implies that any orthogonal basis with respect to one of the $H$-invariant metrics is orthogonal with respect to every $H$-invariant metric.
    
    We now fix two $H$-invariant metrics on $\mathfrak{m}$: background metric $g=g_1 \oplus \ldots \oplus g_k$ and any general metric $h=a_1 g_1 \oplus \ldots \oplus a_k g_k$.
    We choose an orientation (i.e., a top form $\operatorname{dvol}$) of $\mathfrak{m}$ and we will write the Hodge star corresponding to this orientation and the metric $g$ as $*_g$.
    We also choose an orientation of each irreducible summand compatible with the one on $\mathfrak{m}$, that is, $\operatorname{dvol_0}\wedge\ldots\wedge \operatorname{dvol}_k = \operatorname{dvol}$ (the orientations are not uniquely determined by this relation). 
    With these orientations we get Hodge stars $*_{g_i}$ on each of the summands.
    Take arbitrary $w=w_1\wedge\ldots\wedge w_k$ and $v=v_1\wedge\ldots\wedge v_k$ such that each $w_i,v_i$ lie in $\Lambda^{l_i}\mathfrak{m}_i$ (some $l_i$ may be 0). Consider the defining equation for the Hodge star
    \begin{equation*}
    *_gw\wedge v = \Tilde{g}(w,v) \operatorname{dvol} = \Tilde{g_1}(w_1,v_1)\ldots \Tilde{g_k}(w_k,v_k)\operatorname{dvol_1}\wedge\ldots\wedge\operatorname{dvol}_k = *_{g_1}w_1\wedge v_1\wedge \ldots \wedge*_{g_k}w_k\wedge v_k,
    \end{equation*}
    where $\Tilde{g}$ and $\Tilde{g_i}$ are the induced metrics on the corresponding spaces of forms.
    From the above equality, we obtain the up-to sign relation
    \begin{equation*}
        *_g(w_1\wedge\ldots\wedge w_k) = \pm*_{g_1}w_1\wedge\ldots\wedge*_{g_k}w_k.
    \end{equation*}
    It is easy to check that when we take an element $w\in\Lambda^l\mathfrak{m}_i\subseteq \Lambda^l\mathfrak{m}$, we obtain $*_{cg_i}w = c^{\frac{n_i-2l}{2}}*_{g_i}w$, where $n_i$ is the dimension of $\mathfrak{m}_i$.    
    It follows that for an element $w\in\Lambda^{l_1}\mathfrak{m}_1\wedge,\ldots,\wedge \Lambda^{l_k}\mathfrak{m}_k$, where $l_1+\ldots+l_k = l$, we obtain 
    \begin{equation}\label{Hodge star with coefficients}
        *_h w = \pm a_1^\frac{n_1-2l_1}{2}\cdot\ldots\cdot a_k^\frac{n_k-2l_k}{2}*_g w.
    \end{equation}
    Therefore, on each $W_j$ the operators $*_h$ and $*_g$ differ only by a constant.
    
    We can now choose an orthonormal basis of $W_j^H$ for every $j\in\{1,\ldots,p\}$ with respect to the background metric $g$.
    Joining these bases creates a basis $\beta$. We define $\beta^*:= *_g \beta$.
    It is now clear that the matrix of $*_g$ with respect to the bases $\beta$ and $\beta^*$ is the identity matrix .
    Because of the previous observation, $*_h$ has a diagonal matrix with respect to these bases as well.
\end{proof}

\begin{remark}\label{wedging invariant forms is not enough}
The spaces $W_j^H$ defined above are orthogonal to each other for each $l$ regardless of the $H$-invariant inner product.    
\end{remark}

\begin{remark}
In the proof above, $\Lambda^l(\mathfrak{m})^H$ is not necessarily equal to
    \begin{equation*}
        \bigoplus_{l_1+\ldots l_k=k} \Lambda^{l_1}(\mathfrak{m}_1)^H\wedge\ldots\wedge \Lambda^{l_k}(\mathfrak{m}_k)^H.
    \end{equation*}
    Consider, for example, the action of $O(2)$ on $\mathbb{R}^2\oplus\mathbb{R}$ given by the standard and the determinant representation, respectively. Clearly, $\Lambda^2(\mathbb{R}^2)^{O(2)}$ is zero because there is a reflection in the group, but also $\Lambda^1(\mathbb{R})^{O(2)}$ is zero because the reflection has determinant $-1$. However, these effects can cancel out, that is, $\Lambda^3(\mathbb{R}^2\oplus\mathbb{R})^{O(2)}=(\Lambda^2(\mathbb{R}^2)\wedge \Lambda^1(\mathbb{R}))^{O(2)} = span\{ e_1\wedge e_2 \wedge e_3\}$ is non-zero.
\end{remark}

\begin{cor} \label{Basis with resp star - corollary}
    Let $G/H$ be a compact homogeneous space of dimension $n$ with a reductive decomposition $\mathfrak{g}=\mathfrak{h} \oplus \mathfrak{m}$, where $\mathfrak{m}$ is an $H$-invariant complement of $\mathfrak{h}$. If there are no equivalent summands in $\mathfrak{m}$ considered as a representation of $H$, then for every $l$, there exist bases of $\Omega^l\big(G/H\big)^G$ and $\Omega^{n-l}\big(G/H\big)^G$ with respect to which any $G$-invariant Hodge star has a diagonal matrix.
\end{cor}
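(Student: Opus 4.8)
The plan is to reduce everything to Lemma~\ref{Invariant basis for * - lemma} via the standard dictionary between $G$-invariant tensors on $G/H$ and $H$-invariant tensors on the isotropy representation. First I would recall that evaluation at the identity coset $eH$ gives a linear isomorphism $\Omega^l(G/H)^G \xrightarrow{\ \sim\ } (\Lambda^l\mathfrak{m}^*)^H$ for every $l$, and that under this isomorphism $G$-invariant Riemannian metrics on $G/H$ correspond bijectively to $H$-invariant inner products on $\mathfrak{m}$, while a $G$-invariant orientation (which must be assumed for the statement to make sense) corresponds to a choice of $H$-invariant volume form on $\mathfrak{m}$. Since the Hodge star is a pointwise algebraic operation and all the data involved are $G$-invariant, the Hodge star $*:\Omega^l(G/H)^G\to\Omega^{n-l}(G/H)^G$ induced by a $G$-invariant metric and orientation is intertwined, through evaluation at $eH$, with the Hodge star $*:(\Lambda^l\mathfrak{m}^*)^H\to(\Lambda^{n-l}\mathfrak{m}^*)^H$ induced by the corresponding $H$-invariant inner product and volume form on $\mathfrak{m}$.

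Next I would observe that the hypothesis transfers to the dual: if $\mathfrak{m}=\mathfrak{m}_1\oplus\cdots\oplus\mathfrak{m}_k$ has no equivalent summands as an $H$-representation, then neither does $\mathfrak{m}^*=\mathfrak{m}_1^*\oplus\cdots\oplus\mathfrak{m}_k^*$, since duals of inequivalent irreducibles are inequivalent. (Alternatively, one fixes one background $H$-invariant inner product on $\mathfrak{m}$ once and for all, uses it to identify $\mathfrak{m}\cong\mathfrak{m}^*$ $H$-equivariantly, and applies the lemma on $\mathfrak{m}$ directly.) Then Lemma~\ref{Invariant basis for * - lemma}, applied to the $H$-representation $\mathfrak{m}^*$, produces bases $\beta_l$ of $(\Lambda^l\mathfrak{m}^*)^H$ and $\beta_l^*$ of $(\Lambda^{n-l}\mathfrak{m}^*)^H$ with respect to which the Hodge star of \emph{every} $H$-invariant inner product on $\mathfrak{m}^*$ (equivalently, on $\mathfrak{m}$) is diagonal. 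Pulling $\beta_l$ and $\beta_l^*$ back through the evaluation isomorphism of the first paragraph yields bases of $\Omega^l(G/H)^G$ and $\Omega^{n-l}(G/H)^G$; by the intertwining property, any $G$-invariant Hodge star on $G/H$ is diagonal in these bases.

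The only genuinely non-formal point — and the one I would spell out carefully — is the intertwining claim in the first paragraph: that for a homogeneous metric the Hodge star on invariant forms is computed fibrewise and agrees with the linear-algebra Hodge star on $\mathfrak{m}^*$ at $eH$. This follows because a $G$-invariant form is determined by its value at $eH$, the metric and orientation are likewise determined there, and the defining relation $*\alpha\wedge\beta=\langle\alpha,\beta\rangle\,\mathrm{dvol}$ is pointwise; the resulting form $*\alpha$ is again $G$-invariant because $*$ commutes with the isometric $G$-action. Everything else — existence of the evaluation isomorphism, correspondence of invariant metrics with $H$-invariant inner products via Schur's lemma, and the passage to the dual representation — is standard and requires no computation.
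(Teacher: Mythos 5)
Your proposal is correct and follows the same route as the paper, which simply cites the isomorphism $\Omega^l(G/H)^G \cong \Lambda^l(\mathfrak{m}^*)^H$ together with Lemma~\ref{Invariant basis for * - lemma}; your write-up merely makes explicit the intertwining of the Hodge star under evaluation at $eH$ and the passage from $\mathfrak{m}$ to $\mathfrak{m}^*$, details the paper leaves implicit.
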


\begin{proof}
    The corollary is a straightforward consequence of the lemma and the isomorphism $\Omega^l(G/H)^G \cong \Lambda^l(\mathfrak{m^*})^H$.
\end{proof}

\section{Invariant $G_2$-structures on the sphere $S^7$}\label{Section 4}
First, note that $S^7$ is orientable and spinnable since necessarily the first two Stiefel-Whitney classes vanish ($w_1(S^7) = w_2(S^7) = 0$) because $H^1(S^7;\mathbb{Z}_2) = H^2(S^7;\mathbb{Z}_2) = 0$. Therefore, every metric comes from some $G_2$-structure (see, for example, \cite{bryant2025remarksg2structures}). We will look at those $G_2$-structures that are invariant under some transitive group action.
There are only seven Lie groups acting transitively and effectively on the $S^7$; see \cite{Oniscik}, \cite{Montgomery1943TransformationGO} and~\cite{Borel-sphere}. They can be thought of inside a poset diagram of subgroups of $SO(8)$, as seen in Figure 1.

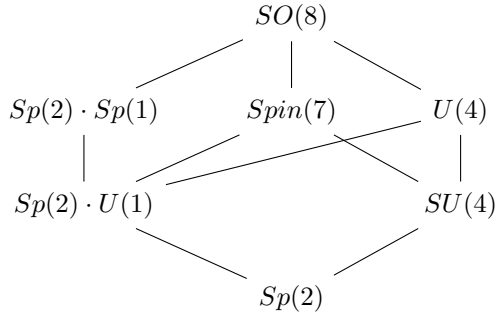
\begin{figure}[h]
\centering
    \begin{tikzcd}
& SO(8) \arrow[rd, no head] \arrow[d, no head] \arrow[ld, no head] &                           \\
Sp(2)\cdot Sp(1) \arrow[d, no head]                      & Spin(7) \arrow[rd, no head] \arrow[ld, no head]                  & U(4) \arrow[d, no head]   \\
Sp(2)\cdot U(1) \arrow[rd, no head] \arrow[rru, no head] &                             & SU(4) \arrow[ld, no head] \\
& Sp(2)                                                            &
\end{tikzcd}
\caption{Poset diagram of subgroups of $SO(8)$ acting transitively on $S^7$}
\end{figure}
\noindent Here the dot $\cdot$ represents taking a product and factoring by $\mathbb{Z}_2$, e.g., $Sp(2)\cdot Sp(1) =\allowbreak (Sp(2)\times Sp(1))/\mathbb{Z}_2$.
The nontrivial inclusions are consequences of low-dimensional isomorphisms, that is,
\begin{align*}
    Sp(2)\cdot U(1) \cong Spin(5)\cdot Spin(2) \leq Spin(7) &&
    \text{and} &&
    SU(4)\cong Spin(6)\leq Spin(7).
\end{align*}
We obtain possible presentations of $S^7$ as a homogeneous space $G/H$ where $G$ and $H$ are from the following table:

\begin{table}[h!]
\centering
\begin{tabular}{ c|c }
        $G$ & $H$\\
        \hline
        $SO(8)$ & $SO(7)$\\
        $U(4)$ & $U(3)$\\
        $Spin(7)$ & $G_2$\\
        $SU(4)$ & $SU(3)$\\
        $Sp(2)\cdot Sp(1)$ & $Sp(1)\cdot Sp(1)$\\
        $Sp(2)\cdot U(1)$ & $Sp(1)\cdot U(1)$\\
        $Sp(2)$ & $Sp(1)$
\end{tabular}
\caption{Presentations of $S^7$ as a homogeneous space}
\end{table}

There are no $SO(8)$- and $U(4)$-invariant $G_2$-structures on the 7-sphere since the corresponding isotropy groups, that is, $SO(7)$ and $U(3)$, are not subgroups of $G_2$. (For the list of all compact manifolds that admit $G_2$-structures, consult \cite{REIDEGELD} and \cite{HongVanLee}).
If we look at the isotropy decompositions of the homogeneous spheres admitting $G_2$-structures, we get the following (see \cite{Ziller1982}):
\begin{equation}\label{isotropy decompostions}
\begin{split}
    \mathfrak{spin}(7) &= \mathfrak{g_2} \oplus \mathfrak{m}_7,\\
    \mathfrak{su}(4) &= \mathfrak{su}(3) \oplus \mathfrak{m}_1 \oplus \mathfrak{m}_6,\\
    \mathfrak{sp}(2)\oplus\mathfrak{sp}(1) &= \mathfrak{sp}(1)\oplus\mathfrak{sp}(1) \oplus \mathfrak{m}_4 \oplus \mathfrak{m}_3,\\
    \mathfrak{sp}(2)\oplus\mathfrak{u}(1) &= \mathfrak{sp}(1)\oplus\mathfrak{u}(1) \oplus\mathfrak{m}_4\oplus\mathfrak{m}_2\oplus\mathfrak{m}_1,\\
    \mathfrak{sp}(2)&=\mathfrak{sp}(1)\oplus\mathfrak{m}_4\oplus\mathfrak{m}_1\oplus\mathfrak{m}_1\oplus\mathfrak{m}_1.    
\end{split}
\end{equation}
\noindent
Here, the indices denote the real dimensions of the irreducible summands. We can see that for all but the last space we can use Corollary \ref{Basis with resp star - corollary} above.
On the other hand, all the one-dimensional summands in the last decomposition are trivial summands and hence equivalent.

Consider also the decompositions of the Lie algebra $\mathfrak{g}_2$ under the restrictions of the adjoint representations to these subgroups:
\begin{equation}
    \begin{split}
        \mathfrak{g}_2^{SU(3)}&=\mathfrak{su}(3)\oplus\mathfrak{m}_6,\\
        \mathfrak{g}_2^{Sp(1)\cdot Sp(1)}&=\mathfrak{sp}(1)\oplus\mathfrak{sp}(1)\oplus\mathfrak{m_8},\\
        \mathfrak{g}_2^{Sp(1)\cdot U(1)}&=\mathfrak{sp}(1)\oplus\mathfrak{u}(1)\oplus\mathfrak{m}_2\oplus\mathfrak{m}_4\oplus\mathfrak{m}_4,\\
        \mathfrak{g}_2^{Sp(1)}&=\mathfrak{sp}(1)\oplus\mathfrak{m}_1\oplus\mathfrak{m}_1\oplus\mathfrak{m}_1\oplus\mathfrak{m}_4\oplus\mathfrak{m}_4.\\
    \end{split}
\end{equation}

\vspace{2mm}
Using the observations below equations \eqref{number of G2 structures} and \eqref{number of 2-forms} as well as both of the decompositions above, we see that the dimensions of the spaces of invariant forms of degree at most 3 are:

\begin{table}[h!]
\centering
\begin{tabular}{ c|c|c|c }
        $G$ & $\operatorname{dim}\big(\Omega^1(S^7)^G)$ & $\operatorname{dim}\big(\Omega^2(S^7)^G )$&           $\operatorname{dim}\big(\Omega^3(S^7)^G)$ \\
        \hline
        $Spin(7)$ & 0 & 0 & 1 \\
        $SU(4)$ & 1 & 1 & 3\\
        $Sp(2)\cdot Sp(1)$ & 0 & 0 & 2\\
        $Sp(2)\cdot U(1)$ & 1 & 2 & 4 \\
        $Sp(2)$ & 3 & 6 & 10
\end{tabular}
\caption{Dimensions of the spaces of invariant forms on $S^7$}
\label{number of inv. k-forms for spheres}
\end{table}
Of course, the dimensions of the other spaces are easily obtainable via the isomorphisms $\Omega^k(S^7)^G\cong \Omega^{7-k}(S^7)^G$ for each $k\in \{1,2,3\}$.

\section{Solving the Poisson equation}

For a homogeneous manifold, the space of invariant 3-forms $\Omega^3(G/H)^G\cong\Lambda^3(\mathfrak{g}/\mathfrak{h})^H$ is finite-dimensional. For a fixed metric, the Hodge Laplacian is a linear operator on this space. Consequently, after fixing a basis, we can interpret the $G_2$-Laplacian as a matrix with entries parametrised by the input $\varphi$. The process for solving the Poisson equation for each symmetry type is the following.
\begin{itemize}
    \item[] \textbf{Step 1}. Take an $Ad_H$-invariant complement $\mathfrak{m}$ to the Lie algebra $\mathfrak{h}$ so that $\mathfrak{g}=\mathfrak{h}\oplus\mathfrak{m}$ and choose a suitable basis for the complement $\mathfrak{m}$ and bases for spaces of invariant forms as in Corollary \ref{Basis with resp star - corollary}. 
    \item[] \textbf{Step 2}. Using \eqref{defining equation for G2}, determine the conditions that ensure the positivity of 3-forms (i.e., that the symmetric bilinear form associated to the 3-form is positive definite). These forms are precisely the $G_2$-structures on the 7-sphere with particular symmetry.
    \item[] \textbf{Step 3}. Compute the matrix of the $G_2$-Laplace operator using formula \eqref{Hodge star with coefficients} for the Hodge star and the following well-known formula for the exterior derivative on a homogeneous space:
    \begin{equation}\label{differential in homogeneus space}
        d\alpha(X_1,\ldots,X_{k+1}) = \sum_{i<j}(-1)^{i+j}\alpha([X_i,X_j]_\mathfrak{m},\ldots, \hat{X_i},\ldots,\hat{X_j},\ldots,X_{k+1}),
    \end{equation}
    where $[-,-]_\mathfrak{m}$ is the projection of the commutator to the subspace $\mathfrak{m}\subseteq \mathfrak{g}$.
    Because of our choice of basis, the matrix for the Hodge star will be a diagonal matrix, albeit dependent on the metric.
    \item[] \textbf{Step 4}. Finally, solve the Poisson equation and the eigenvalue problem using the resulting expression. Both of these are now equations with rational functions on the left-hand sides.
\end{itemize}

\begin{remark}\label{relation between eigenvalue and fixed points}
    For the eigenvalue problem
    \begin{equation*}
        \Delta_\varphi\varphi=\lambda\varphi,
    \end{equation*}
    we have noted in Remark \ref{eigenvalue problem - rmk about lambda} that $\lambda$ is non-negative. It also cannot be zero for a non-vanishing $\varphi$ since $H^3(S^7;\mathbb{Z})=0$. Therefore, all of the eigenforms are positive multiples of the fixed points.
\end{remark}

We will now carry out these steps for each of the symmetry types in Theorem \ref{theorem}. We are using conventions that $\mathbb{Z}_2=\{-1,1\}$ and $\mathbb{R}^+ = (0,\infty)$.

\subsection{$Spin(7)$-symmetry}
We begin with the simplest case of $Spin(7)$-symmetry.
As seen in equation \eqref{isotropy decompostions}, the isotropy representation is an irreducible representation of the group $G_2$. In fact, it is the standard representation that comes from the inclusion $G_2\leq SO(7)$ and corresponds to the automorphisms of the octonions.

We fix a $G_2$-invariant 3-form $\varphi_0$ and the induced $G_2$-invariant metric $g_0$. From Table \ref{number of inv. k-forms for spheres} and the fact that the $G_2$ representation is irreducible, they are both unique up to scaling.
Trivially, we obtain the following proposition (cf. Remark \ref{bundle of G2 structures} and equation \eqref{decomposition of the space of G2-structures}).

\begin{proposition}\label{positive 3-forms for Spin(7)}
The map taking $(\sigma,a)\in \mathbb{Z}_2\times \mathbb{R}^+$ to the 3-form
    \begin{equation*}
        \varphi = A^3\varphi_0,
    \end{equation*}
where $A=\sigma a$, is a diffeomorphism onto the space of $Spin(7)$-invariant positive 3-forms on the 7-sphere. The metric induced by the $G_2$-structure $\varphi$ is then given by
    \begin{equation*}
        g_\varphi = A^2g_0.
    \end{equation*}
\end{proposition}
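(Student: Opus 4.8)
The statement is essentially a direct reading of the structure theory set up in Remark~\ref{bundle of G2 structures} specialised to $G = Spin(7)$, so the plan is to unwind the decomposition~\eqref{decomposition of the space of G2-structures} rather than compute anything. First I would recall from Table~\ref{number of inv. k-forms for spheres} that $\dim\Omega^3(S^7)^{Spin(7)} = 1$, and that this one-dimensional space is spanned by the chosen $G_2$-invariant form $\varphi_0$; likewise the isotropy representation being irreducible forces, by Schur's lemma, the $Spin(7)$-invariant metrics to be exactly the positive multiples of $g_0$. Hence every $Spin(7)$-invariant 3-form is $t\varphi_0$ for some $t\in\mathbb{R}$, and the positive ones form the subset $t\neq 0$ (positivity is an open, nonempty, scaling-stable condition on a one-dimensional space, and $\pm\varphi_0$ are both positive since $G_2 = \mathrm{Stab}(\varphi_0)$ and $-\varphi_0$ has the same stabiliser).

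Next I would introduce the parametrisation. Writing $t = A^3$ with $A = \sigma a$, $\sigma\in\mathbb{Z}_2$, $a\in\mathbb{R}^+$, gives a bijection $\mathbb{Z}_2\times\mathbb{R}^+ \to \mathbb{R}\setminus\{0\}$, $(\sigma,a)\mapsto (\sigma a)^3$, because the real cube root is a diffeomorphism $\mathbb{R}\setminus\{0\}\to\mathbb{R}\setminus\{0\}$ and $\mathbb{R}\setminus\{0\}\cong \mathbb{Z}_2\times\mathbb{R}^+$ via sign and absolute value. Smoothness of the map and of its inverse is immediate since everything happens in a fixed one-dimensional vector space with a polynomial (resp.\ cube-root) reparametrisation; this gives the claimed diffeomorphism onto the space of $Spin(7)$-invariant positive 3-forms. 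The cube exponent is chosen precisely so that the induced metric scales linearly in $a^2$: by the computation in the proof of Lemma~\ref{scaling of G2 Laplacian}, $g_{c\varphi_0} = c^{2/3}g_{\varphi_0}$, so with $\varphi = A^3\varphi_0$ one gets $g_\varphi = (A^3)^{2/3} g_0 = A^2 g_0$, using that $A^2 = a^2 > 0$ and that the induced metric depends only on $\varphi$ (equation~\eqref{defining equation for G2} is insensitive to replacing $\varphi$ by $-\varphi$, consistent with $A^2$ depending only on $a$).

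There is no real obstacle here; the only point requiring a moment's care is the claim that $-\varphi_0$ is positive, i.e.\ that the bilinear form it induces via~\eqref{defining equation for G2} is again positive definite. This follows because in~\eqref{defining equation for G2} replacing $\varphi$ by $-\varphi$ multiplies the left-hand side by $(-1)^3 = -1$, and simultaneously reverses the orientation, so $\operatorname{dvol}_g$ also flips sign; the two sign changes cancel and the same $g$ is recovered. Equivalently, $-\varphi_0$ lies in the same $GL(7)$-orbit of positive forms (apply any orientation-reversing element of $GL(7)$), hence is positive. With this observed, the proposition is proved by assembling the bijection on parameters, the identification of the invariant 3-forms with $\mathbb{R}\varphi_0$, and the scaling formula for the induced metric.
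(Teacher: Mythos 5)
Your argument is correct and follows essentially the same route as the paper, which simply declares the proposition trivial by citing Remark~\ref{bundle of G2 structures} and the one-dimensionality of $\Omega^3(S^7)^{Spin(7)}$ from Table~\ref{number of inv. k-forms for spheres}; your write-up just fills in those details (Schur's lemma, positivity of $\pm\varphi_0$ via the orientation flip, and the scaling $g_{c\varphi}=c^{2/3}g_\varphi$). The only cosmetic slip is the phrase ``apply any orientation-reversing element of $GL(7)$'' --- you need the specific element $-\mathrm{id}$ (or another map pulling $\varphi_0$ back to $-\varphi_0$) --- but your preceding sign-cancellation argument already establishes the point.
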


\begin{remark}
    This is an example of a metric with more symmetries than the given 3-form. The metric $g_\varphi$ is actually $SO(8)$-invariant (it is a round metric), while the 3-form $\varphi$ inducing it is just $Spin(7)$-invariant. Indeed, the 3-form cannot be $SO(8)$-invariant as the isotropy group of this action, $SO(7)$, is transitive on orthonormal triples.
\end{remark}

For a basis of the space $\Lambda^4(\mathfrak{m}_7)^{G_2}$, we choose $*_{g_0}\varphi_0$. Obviously, any $G_2$-invariant 3-form is coclosed with respect to any metric because $\Lambda^5(\mathfrak{m}_7)^{G_2}\cong\Lambda^2(\mathfrak{m}_7)^{G_2}=\{0\}$ (see Table \ref{number of inv. k-forms for spheres}). Therefore, the associated Laplace operator is just $\Delta_\varphi = *_\varphi d*_\varphi d$. The operator $d$ with respect to the bases $\{\varphi_0\}$ and $\{*\varphi_0\}$ is multiplication by some $K\in \mathbb{R}$. The $G_2$-Laplacian then becomes
\begin{equation*}
    \varphi_0\xmapsto{d}K*_{g_0}\varphi_0\xmapsto{*_{g_0}} K\varphi_0 \xmapsto{d}K^2*_{g_0}\varphi_0\xmapsto{*_{g_0}} K^2\varphi_0.
\end{equation*}
Using Lemma \ref{scaling of G2 Laplacian} we obtain
\begin{equation*}
    \Delta_{A^3\varphi}A^3\varphi_0 = A\Delta_\varphi\varphi_0 = AK^2\varphi_0.
\end{equation*}
In other words, the $G_2$-Laplacian acts (up to a constant multiple) like the cube root function. Consequently, we have the following Theorem.

\begin{theorem}\label{thm for Spin(7)}
    The $G_2$-Laplace operator $\varphi\mapsto \Delta_\varphi\varphi$ is an orientation-preserving bijection on the space of positive $Spin(7)$-invariant 3-forms on the 7-sphere and has exactly two fixed points which are $\pm K^3\varphi_0$.
\end{theorem}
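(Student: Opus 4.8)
The plan is to combine the explicit description of the $G_2$-Laplacian on $Spin(7)$-invariant 3-forms computed just above with the scaling behaviour from Lemma~\ref{scaling of G2 Laplacian}, and then read off the bijectivity and fixed-point statements directly. First I would invoke Proposition~\ref{positive 3-forms for Spin(7)}: every positive $Spin(7)$-invariant 3-form is uniquely of the form $\varphi = A^3\varphi_0$ with $A = \sigma a \in \mathbb{R}\setminus\{0\}$, so the space of such forms is parametrised by $A \in \mathbb{R}^*$, with the sign of $A$ recording the orientation. Under this identification, the computation above gives $\Delta_{A^3\varphi_0}(A^3\varphi_0) = A K^2 \varphi_0$, i.e.\ in the coordinate $A$ the $G_2$-Laplacian is the map $A \mapsto (K^2 A)^{1/3}$ — a constant-multiple of the real cube root. (One should note $K \neq 0$: this follows because $H^3(S^7;\mathbb{Z}) = 0$, so no invariant 3-form is closed, hence $d$ is nonzero on $\{\varphi_0\}$; I would spell this out as it is the only non-formal input.)

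Next I would observe that $A \mapsto (K^2 A)^{1/3}$ is a homeomorphism of $\mathbb{R}^*$ onto itself, being continuous, strictly increasing, with a continuous inverse $B \mapsto B^3/K^2$; it sends positive $A$ to positive values and negative to negative, which under Proposition~\ref{positive 3-forms for Spin(7)} is exactly the statement that the $G_2$-Laplacian preserves orientation. This establishes the bijectivity and orientation-preservation claims. For the fixed points, I would solve $(K^2 A)^{1/3} = A$, equivalently $K^2 A = A^3$, i.e.\ $A^2 = K^2$, giving $A = \pm K$ and hence the two fixed 3-forms $\varphi = (\pm K)^3 \varphi_0 = \pm K^3 \varphi_0$. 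Alternatively, and perhaps more cleanly, one can argue via Remark~\ref{fixed points}: the $G_2$-Laplacian preserves the unique line $\mathbb{R}\varphi_0$, so it descends to a map on its projectivisation which is trivially the identity there, and Remark~\ref{fixed points} then guarantees exactly two honest fixed points, one per orientation; matching with the scaling identity pins them down as $\pm K^3\varphi_0$.

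There is essentially no obstacle here: the theorem is a direct corollary of the preceding explicit computation, and the only points requiring a sentence of justification are (i) the non-vanishing of $K$, handled by the cohomological vanishing $H^3(S^7;\mathbb{Z})=0$ (equivalently Remark~\ref{relation between eigenvalue and fixed points}), and (ii) the bookkeeping that the sign of the scaling parameter $A$ corresponds to orientation in the sense of the decomposition~\eqref{decomposition of the space of G2-structures}, so that "$A \mapsto (K^2A)^{1/3}$ preserves signs" literally means "orientation-preserving." If anything is delicate it is purely notational — keeping straight that the parameter in Proposition~\ref{positive 3-forms for Spin(7)} is $A = \sigma a$ with $\varphi = A^3\varphi_0$, so that one works in the variable $A$ rather than $A^3$ throughout.
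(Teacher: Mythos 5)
Your proposal is correct and follows essentially the same route as the paper: parametrise positive $Spin(7)$-invariant forms as $A^3\varphi_0$, use coclosedness and one-dimensionality to get $\Delta_{A^3\varphi_0}(A^3\varphi_0)=AK^2\varphi_0$, and read off bijectivity, orientation-preservation, and the fixed points $A=\pm K$. Your explicit justification that $K\neq 0$ via $H^3(S^7)=0$ is a point the paper only handles implicitly (through Remark \ref{relation between eigenvalue and fixed points}), so including it is a welcome addition rather than a deviation.
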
\noindent
\begin{remark}
    The constant $K$ can be made explicit by choosing a basis for $\mathfrak{m}_7$ and computing the differential on 3-forms.
\end{remark}
There is an immediate corollary.
\begin{cor}
    For every positive $Spin(7)$-invariant 3-form $\mu$ there is a unique positive $Spin(7)$-invariant solution $\varphi$ to the Poisson equation
    \begin{equation*}
        \Delta_\varphi\varphi=\mu.
    \end{equation*}
\end{cor}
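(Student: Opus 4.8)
The corollary is an immediate consequence of Theorem~\ref{thm for Spin(7)} together with the scaling behaviour of the $G_2$-Laplacian established in Lemma~\ref{scaling of G2 Laplacian}. The plan is as follows. First I would observe that, by Theorem~\ref{thm for Spin(7)}, the $G_2$-Laplace operator restricts to a bijection of the one-parameter family $\{A^3\varphi_0 : A\in\mathbb{R}\setminus\{0\}\}$ of positive $Spin(7)$-invariant 3-forms onto itself; explicitly, by the computation preceding the theorem, $A^3\varphi_0 \mapsto AK^2\varphi_0$. Since every positive $Spin(7)$-invariant 3-form is of the form $\mu = B^3\varphi_0$ for a unique $B\in\mathbb{R}\setminus\{0\}$ (Proposition~\ref{positive 3-forms for Spin(7)}), solving $\Delta_\varphi\varphi = \mu$ within the invariant class amounts to solving $AK^2 = B^3$ for $A$, which has the unique solution $A = B^3/K^2$. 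Setting $\varphi = A^3\varphi_0$ then gives existence and uniqueness of an invariant solution.

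The one remaining point to address is why it suffices to search for solutions among $Spin(7)$-invariant 3-forms, rather than among all 3-forms on $S^7$. Here I would invoke the standard averaging argument for compact group actions: the operator $\varphi \mapsto \Delta_\varphi\varphi$ is equivariant under the action of $Spin(7)$ on 3-forms, so if $\mu$ is invariant and $\varphi$ solves $\Delta_\varphi\varphi = \mu$, then every translate $g\cdot\varphi$ also solves the equation; however, since the corollary only claims existence and uniqueness \emph{among invariant} solutions, this subtlety can be handled briefly or simply by restricting attention to the invariant sector from the outset, as the statement does.

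The argument involves no genuine obstacle; the only mildly delicate ingredient is keeping track of the exponent $\tfrac{1}{3}$ coming from Lemma~\ref{scaling of G2 Laplacian}, which is precisely what makes the map $A\mapsto AK^2$ (after reparametrising the family by $A^3$) a bijection on all of $\mathbb{R}$ rather than merely on a half-line. In particular, the orientation-reversing case $\sigma=-1$ is covered automatically, since $A\mapsto AK^2$ is odd. I would therefore present the proof in two short lines: cite Theorem~\ref{thm for Spin(7)} for the bijectivity on the invariant family, and cite Proposition~\ref{positive 3-forms for Spin(7)} for the parametrisation of the target, concluding that the preimage of $\mu = B^3\varphi_0$ is the single positive invariant 3-form $(B^3/K^2)^{3}\varphi_0^{\,?}$ — more precisely $\left(\tfrac{B^3}{K^2}\right)\!\varphi_0$ written in the $A$-coordinate, i.e.\ the $G_2$-structure with $A = B^3/K^2$.
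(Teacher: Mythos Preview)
Your proposal is correct and matches the paper's approach: the paper states this as an ``immediate corollary'' of Theorem~\ref{thm for Spin(7)} with no further argument, and your unpacking of the bijection $A^3\varphi_0\mapsto AK^2\varphi_0$ together with Proposition~\ref{positive 3-forms for Spin(7)} is exactly the content behind that remark. The digression about averaging over $Spin(7)$ is unnecessary, as you yourself note---the statement only claims uniqueness among invariant forms---and the momentary notational wobble at the end (the $\varphi_0^{\,?}$) should be cleaned up, but the substance is fine.
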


\vspace{1mm}

\subsection{$SU(4)$-symmetry}
First, let us recall the reductive decomposition for $\mathfrak{su}(4)$ from \eqref{isotropy decompostions}, which can be expressed as follows:

\begin{equation*}
    \mathfrak{su}(4) = \mathfrak{m}_1 \oplus \mathfrak{m}_6 \oplus \mathfrak{su}(3)=\mathbb{R}\begin{pNiceArray}{c|c}
  \boldsymbol{i} & 0 \\
  \hline
  0 & -\boldsymbol{i}I_3
\end{pNiceArray}\oplus \begin{pNiceArray}{c|c} 
  0 & -(\mathbb{C}^3)^* \\
  \hline
  \mathbb{C}^3 & 0
\end{pNiceArray} \oplus \begin{pNiceArray}{c|c}
  0 & 0 \\
  \hline
  0 & \mathfrak{su}(3)
\end{pNiceArray},
\end{equation*}
where $\mathbb{C}^3$ is the standard representation of $SU(3)$ and $\mathbb{R}$ is the trivial representation.

Now, we choose a basis for $\mathfrak{m} = \mathfrak{m}_1\oplus \mathfrak{m}_6$. Take the standard complex basis of $\mathbb{C}^3$ and denote it by $\{\xi_j\}$ for $j\in \{1,2,3\}$, and define

\begin{align} \label{basis for m in SU(3)}
    e_1=\begin{pNiceArray}{c|c}
  3\boldsymbol{i} & 0 \\
  \hline
  0 & -\boldsymbol{i}I_3
\end{pNiceArray} & &
e_{2j}=\begin{pNiceArray}{c|c}
  0 & -\xi_j^T \\
  \hline
  \xi_j & 0
\end{pNiceArray} & &
e_{2j+1}=\begin{pNiceArray}{c|c}
  0 & \boldsymbol{i}\xi_j^T \\
  \hline
  \boldsymbol{i}\xi_j & 0
\end{pNiceArray}, & &
j\in\{1,2,3\}.
\end{align}
We will denote with upper indices the elements of the dual basis.
There are four indecomposable $SU(3)$-invariant forms of degree at most 3 (cf. Table \ref{number of inv. k-forms for spheres}):
\begin{align}\label{primitive SU(3) invariant forms}
    e^1, &&
    \omega=e^{23}+e^{45}+e^{67}, &&
    V_R=e^{246}-e^{257}-e^{347}-e^{356}, &&
    V_I=e^{346}-e^{357}+e^{247}+e^{256}.
\end{align}
Here we mean indecomposable in the sense that they cannot be split into a wedge product of $SU(3)$-invariant forms.
These correspond to the well-known K\"ahler form and to the real and imaginary part of the complex volume form on $\mathbb{C}^3$.
Consequently, the set $\beta =\{e^1\wedge \omega, V_R,V_I\}$ is a basis of the space of invariant 3-forms satisfying the condition in Corollary \ref{Basis with resp star - corollary}.
We are going to compute the metric induced by a general 3-form to find the coefficients for the Hodge star operator.
For reasons that will become clear soon, let us choose the following parametrisation of the space of 3-forms:
\begin{equation} \label{parametrisation of SU(3)-inv forms}
    \varphi=A^3(e^1\wedge\omega)+R^3\cos(\alpha)V_R+ R^3\sin(\alpha)V_I,
\end{equation}
where $A\in \mathbb{R}$, $R\in \mathbb{R}^+\cup\{0\}$ and $\alpha\in[0,2\pi)$.
The relation for the metric \eqref{defining equation for G2} yields
\begin{equation*}
    g_\varphi(e_i,e_j) \operatorname{dvol}_{\varphi}= \frac{1}{6}\iota_{e_i}\varphi\wedge\iota_{e_j}\varphi\wedge\varphi =\begin{cases}
        A^9e^{1234567}, &\text{if } i=j=1,\\
        A^3R^6e^{1234567}, &\text{if } i=j\neq 1,\\
        0, &\text{otherwise}.
    \end{cases}
\end{equation*}
Since we have chosen the basis \eqref{basis for m in SU(3)} above, we know that the metric must be diagonal (see Lemma \ref{Invariant basis for * - lemma}). One can consider the above equation as an equation of matrices (indices $i,j$) with 7-forms as entries. By computing the determinant of the coefficients on both sides (see \cite{Karigiannis-G2andSpin7}), we obtain
\begin{equation*}
    \det\Big ( \sqrt{|\det g_{ij}|}\: g_{ij} \Big)=|\det g_{ij}|^\frac{7}{2}(\det g_{ij}) = A^{27} R^{36}.
\end{equation*}
We immediately see that the condition for the form to be non-degenerate is $A\ne 0$ and $R \ne 0$.
From here, we obtain the unique volume form and the metric induced by this 3-form:
\begin{align*}
    \operatorname{dvol}_{\varphi} = A^3 R^4 e^{1234567}, &&
    g_\varphi = A^6R^{-4} e^1\otimes e^1 + R^2 \sum_{i=2}^7 e^i\otimes e^i.
\end{align*}
\begin{remark}
    As expected, we can clearly see that if the $3$-form scales by $\lambda^3$, then the metric is scaled by a factor $\lambda^2$ and the volume form scales by $\lambda^7$.
\end{remark}
We gather the observations made above in the following proposition (we use slightly different parametrisation than suggested in Remark \ref{bundle of G2 structures}).
\begin{proposition}\label{positive 3-forms for SU(4)}
The map taking $(\sigma, (a, R), \alpha)\in \mathbb{Z}_2\times (\mathbb{R}^+)^2\times S^1$ to the 3-form
    \begin{equation*}
        \varphi=A^3(e^1\wedge\omega)+R^3\cos(\alpha)V_R+ R^3\sin(\alpha)V_I,
    \end{equation*}
where $A=\sigma a$ and $e^1\wedge\omega,V_R$ and $V_I$ are as in \eqref{primitive SU(3) invariant forms}, is a diffeomorphism onto the space of $SU(4)$-invariant positive 3-forms on the 7-sphere. The metric induced by the $G_2$-structure $\varphi$ is given by
    \begin{equation*}
        g_\varphi = A^6R^{-4} e^1\otimes e^1 + R^2 \sum_{i=2}^7 e^i\otimes e^i.
    \end{equation*}
\end{proposition}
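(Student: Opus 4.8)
The plan is to assemble the statement from the computations already laid out in the text, treating the parametrisation \eqref{parametrisation of SU(3)-inv forms} as the definition of a candidate map and then checking it is a diffeomorphism onto the positive cone. First I would observe that $\beta=\{e^1\wedge\omega, V_R, V_I\}$ is genuinely a basis of $\Lambda^3(\mathfrak{m}^*)^H$: the dimension count from Table~\ref{number of inv. k-forms for spheres} gives $\dim\Omega^3(S^7)^{SU(4)}=3$, and the three listed forms are linearly independent because they involve disjoint monomials in the $e^i$. Hence every invariant $3$-form is uniquely $x\,e^1\wedge\omega + y\,V_R + z\,V_I$ for real $x,y,z$, and the map $(\sigma,(a,R),\alpha)\mapsto(A^3, R^3\cos\alpha, R^3\sin\alpha)$ with $A=\sigma a$ is precisely polar-type coordinates on $\mathbb{R}^*\times(\mathbb{R}^2\setminus\{0\})$: it is a smooth bijection onto $\{(x,y,z): x\neq 0,\ (y,z)\neq(0,0)\}$ with smooth inverse (away from $a=0$ and from the origin of the $(y,z)$-plane the cube-root and the angle function are smooth), so it is a diffeomorphism onto its image.

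Next I would identify that image with the positive cone. The content of Step 2 is already in the excerpt: using \eqref{defining equation for G2} one computes $\iota_{e_i}\varphi\wedge\iota_{e_j}\varphi\wedge\varphi$ and finds the associated symmetric bilinear form is diagonal in the basis \eqref{basis for m in SU(3)} with entries proportional to $A^9$ (for $i=j=1$) and $A^3R^6$ (for $i=j\neq 1$), so that $\det(\sqrt{|\det g_{ij}|}\,g_{ij})=A^{27}R^{36}$. I would spell out that this determinant is nonzero exactly when $A\neq 0$ and $R\neq 0$, i.e.\ exactly on the image of our map, and that on this locus all diagonal entries of $g_\varphi$ share the sign of $A^3R^6$ together with the $A^9$ entry sharing the sign of $A$ — but since $A^6R^{-4}>0$ and $R^2>0$ once $A,R\neq0$, the form $g_\varphi = A^6R^{-4}e^1\otimes e^1 + R^2\sum_{i\geq 2}e^i\otimes e^i$ is in fact positive definite. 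Conversely, a degenerate invariant $3$-form has $A=0$ or $R=0$, so it is not positive; this shows the parametrised family is exactly the set of $SU(4)$-invariant positive $3$-forms, and simultaneously records the induced metric.

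The only mildly delicate point — the place I would be most careful — is the extraction of the metric and volume form from the matrix equation $g_\varphi(e_i,e_j)\operatorname{dvol}_\varphi = \tfrac16\iota_{e_i}\varphi\wedge\iota_{e_j}\varphi\wedge\varphi$, since a priori this only determines $g_\varphi$ up to the scalar $\operatorname{dvol}_\varphi$; one must take the determinant of the $7\times 7$ coefficient matrix (as in \cite{Karigiannis-G2andSpin7}), use that $\det(|\det g|^{1/2}g)=|\det g|^{9/2}\operatorname{sgn}(\det g)$ to solve for $|\det g|$, and thereby pin down $\operatorname{dvol}_\varphi = A^3R^4 e^{1234567}$ and hence $g_\varphi$ itself, with the sign of the orientation fixed by the chosen $\operatorname{dvol}$. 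Everything else — the diffeomorphism claim and the positivity dichotomy — is then bookkeeping. I would end by noting that this is the $SU(4)$-specialisation of the general splitting \eqref{decomposition of the space of G2-structures} from Remark~\ref{bundle of G2 structures}, with $(\mathrm{SL}(7)/\mathrm{SO}(7))^H$ the one-dimensional $R$-direction and $(\mathrm{SO}(7)/G_2)^H\cong S^1$ the $\alpha$-circle, which explains a posteriori why the parameter space is $\mathbb{Z}_2\times(\mathbb{R}^+)^2\times S^1$.
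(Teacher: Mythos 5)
Your proposal is correct and takes essentially the same route as the paper: the paper's ``proof'' of Proposition~\ref{positive 3-forms for SU(4)} is precisely the computation preceding it, namely evaluating $\tfrac16\iota_{e_i}\varphi\wedge\iota_{e_j}\varphi\wedge\varphi$ in the basis \eqref{basis for m in SU(3)}, taking the determinant $A^{27}R^{36}$ to get non-degeneracy iff $A\neq 0$, $R\neq 0$, and reading off $\operatorname{dvol}_\varphi=A^3R^4e^{1234567}$ and the positive-definite $g_\varphi$. You only add the (routine but worth stating) check that the polar-type coordinates $(\sigma,a,R,\alpha)\mapsto(A^3,R^3\cos\alpha,R^3\sin\alpha)$ give a diffeomorphism onto $\{x\neq0,\ (y,z)\neq(0,0)\}$, which is exactly what the paper leaves implicit.
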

\begin{remark}
    It follows that there are only positive 3-forms and degenerate 3-forms, that is, there are no 3-forms inducing a metric with signature $(4,3)$. This is not surprising since $SU(3)$ is contained in $G_2$ but not in the stabiliser of the non-degenerate 3-forms with indefinite signature.
\end{remark}

Next, we are going to compute the Hodge star and the differential.
Denote by $*$ the Hodge star corresponding to the metric with $A=R=1$ and $\alpha=0$ (and $\operatorname{dvol} = e^{1234567}$). The space of $SU(3)$-invariant 4-forms is
\begin{equation*}
    \Lambda^4(\mathfrak{m})^{SU(3)}=span\{*(e^1\wedge\omega), *V_R, *V_I\}.
\end{equation*}
Let us denote by $*\beta$ the basis $\{*(e^1\wedge\omega),*V_R,*V_I\}$. 
As noted in the proof of Lemma \ref{Invariant basis for * - lemma}, the Hodge star is given by formula \eqref{Hodge star with coefficients}, which in this case translates into
\begin{align*}
    *_{g_\varphi} (e^1\wedge\omega) &= A^{-3}R^2R^2*(e^1\wedge\omega), \\
    *_{g_\varphi}V_R&= A^3R^{-2}*V_R, \\
    *_{g_\varphi}V_I&= A^3R^{-2}*V_I.
\end{align*}
Therefore, the matrix of the Hodge star $*_{g\varphi}$ from 3-forms to 4-forms with respect to the bases $\beta$ and $*\beta$ is
\begin{equation*}
    \begin{pNiceArray}{c c}
  A^{-3}R^4 & 0 \\
  0 & A^{3}R^{-2}I_2
    \end{pNiceArray}.
\end{equation*}
To compute the exterior differentials, one needs to compute the projections of the Lie brackets under the above decomposition. An easy calculation gives
\begin{align*}
    [e_{2j},e_{2j+1}]_\mathfrak{m} = -\frac{2}{3}e_1, &&
    [e_1, e_{2j}]_\mathfrak{m} = [e_1, e_{2j}] = -4e_{2j+1}, &&
    [e_1, e_{2j+1}]_\mathfrak{m} = [e_1, e_{2j+1}] = 4e_{2j}, &&
    j \in \{1,2,3\}.
\end{align*}
Using formula \eqref{differential in homogeneus space}, we get 
\begin{align} \label{diffenrentials for SU(3)}
    d(e^1\wedge\omega) = \frac{2}{3}\omega\wedge\omega = \frac{4}{3}*(e^1\wedge\omega), &&
    dV_R=12*V_R, &&
    dV_I = 12*V_I.
\end{align}
Since the differential is a bijection between invariant 3-forms and 4-forms, every 4-form is closed, and hence also every 3-form is coclosed (with respect to any invariant metric).
Because of that, the Laplace operator on the $SU(4)$-invariant 3-forms induced by a positive 3-form $\varphi$ as in \eqref{parametrisation of SU(3)-inv forms} is given by (with respect to the basis $\beta$) 
\begin{equation*}
    \Delta_\varphi = *_\varphi d*_\varphi d = \begin{pNiceArray}{c c}
  \frac{16}{3}A^{6}R^{-8} & 0 \\
  0 & 144A^{-6}R^4I_2
    \end{pNiceArray}.
\end{equation*}
The $G_2$-Laplacian is then
\begin{equation}\label{SU(3)-invariant G_2-Laplacian}
    \Delta_\varphi\varphi =  \frac{16}{3}A^{9}R^{-8} (e^1\wedge\omega) + 144A^{-6}R^7(\cos(\alpha)V_R+\sin(\alpha)V_I).
\end{equation}
Observe that the Laplace operator $\Delta_\varphi$ does not depend on the parameter $\alpha$. Furthermore, we obtain the following useful lemma.

\begin{lemma}
    With the coordinates $(\alpha,A,R)$ introduced above, the $G_2$-Laplacian commutes with arbitrary rotations around the $A$-axis.
\end{lemma}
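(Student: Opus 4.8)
The plan is to read off the claim directly from the explicit formula~\eqref{SU(3)-invariant G_2-Laplacian} together with the parametrisation~\eqref{parametrisation of SU(3)-inv forms}. First I would make precise what "rotations around the $A$-axis" means: in the coordinates $(\alpha, A, R)$ the 3-form $\varphi$ has an $A$-component along $e^1\wedge\omega$ and a "planar" component $R^3(\cos\alpha\, V_R + \sin\alpha\, V_I)$ living in the 2-plane $\mathrm{span}\{V_R, V_I\}$; a rotation around the $A$-axis by angle $\theta$ is the map $\rho_\theta$ sending $(\alpha, A, R)\mapsto(\alpha+\theta, A, R)$, equivalently the linear map on $\Lambda^3(\mathfrak{m}^*)^{SU(3)}$ fixing $e^1\wedge\omega$ and rotating the $(V_R, V_I)$-plane by $\theta$. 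Note this is well-defined precisely because $\beta = \{e^1\wedge\omega, V_R, V_I\}$ is an orthonormal-type basis adapted to the Hodge star by Corollary~\ref{Basis with resp star - corollary}, and because $V_R, V_I$ span a single isotypic block.

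The key computational input is the lemma preceding the statement: the matrix of $\Delta_\varphi$ (with respect to $\beta$) depends only on $(A, R)$ and not on $\alpha$, and moreover it is block-diagonal of the form $\mathrm{diag}\big(\tfrac{16}{3}A^6 R^{-8},\, 144 A^{-6} R^4,\, 144 A^{-6} R^4\big)$ — that is, the lower $2\times 2$ block is a scalar multiple of the identity $I_2$. These two facts together are exactly what is needed: since the $2\times 2$ block is scalar, it commutes with the rotation matrix $\begin{psmallmatrix}\cos\theta & -\sin\theta\\ \sin\theta & \cos\theta\end{psmallmatrix}$ acting on the $(V_R, V_I)$-plane, and since the whole matrix $\Delta_\varphi$ does not change when $\alpha$ is replaced by $\alpha+\theta$ (because $\rho_\theta$ only changes $\alpha$, leaving $A, R$ fixed), we get $\Delta_{\rho_\theta\varphi} = \Delta_\varphi$ as linear operators. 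Hence
\begin{equation*}
    \Delta_{\rho_\theta\varphi}(\rho_\theta\varphi) = \Delta_\varphi(\rho_\theta\varphi) = \rho_\theta(\Delta_\varphi\varphi),
\end{equation*}
where the last equality is the statement that the linear operator $\Delta_\varphi$ commutes with $\rho_\theta$, which holds because $\Delta_\varphi$ acts as a scalar on the $\mathrm{span}\{V_R,V_I\}$ block (and trivially on the $e^1\wedge\omega$ line, which $\rho_\theta$ fixes). This says exactly that the $G_2$-Laplacian $\varphi\mapsto\Delta_\varphi\varphi$ is $\rho_\theta$-equivariant, i.e.\ commutes with rotations around the $A$-axis. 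One can also simply verify it against~\eqref{SU(3)-invariant G_2-Laplacian} directly: applying $\rho_\theta$ to $\varphi$ replaces $\alpha$ by $\alpha+\theta$ and leaves $A, R$ unchanged, and the right-hand side of~\eqref{SU(3)-invariant G_2-Laplacian} transforms the same way since its $e^1\wedge\omega$-coefficient is $\alpha$-independent and its $(V_R, V_I)$-coefficients $(\cos\alpha, \sin\alpha)$ scale by the single common factor $144 A^{-6} R^7$.

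I do not expect a serious obstacle here; the content is entirely bookkeeping. The one point that deserves care is the identification of $\rho_\theta$ with a genuine geometric symmetry rather than just a coordinate relabelling — but this is legitimate because $SU(4)\subset Spin(7)$ and the residual rotation in the $(V_R, V_I)$-plane is induced by the action of an element of $Spin(7)/SU(4)$ (equivalently, by an $SU(3)$-equivariant isometry of $\mathfrak{m}$ rotating the complex volume form's phase), which normalises $SU(4)$ and therefore acts on the space of $SU(4)$-invariant 3-forms. Making this remark explicit, or else simply declaring $\rho_\theta$ to be the linear map defined above and checking equivariance against~\eqref{SU(3)-invariant G_2-Laplacian}, completes the proof in a few lines.
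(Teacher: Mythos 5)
Your proof is correct and follows essentially the same route as the paper: define the rotation $T_\theta$ in the $(V_R,V_I)$-plane, note that the matrix of $\Delta_\varphi$ is independent of $\alpha$ and acts as a scalar on that plane, and conclude $\Delta_{T_\theta\varphi}(T_\theta\varphi)=T_\theta(\Delta_\varphi\varphi)$. Your closing remark identifying the rotation with a genuine symmetry corresponds to the paper's observation that $(SO(7)/G_2)^{SU(3)}$ is a circle parametrised by $\alpha$, so nothing is missing.
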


\begin{proof}
    Let us denote by $T_\theta$ the rotation by an angle $\theta$, i.e.,
    \begin{align*}
        T_\theta(\varphi)&=T_\theta(A^3(e^1\wedge\omega)+R^3\cos(\alpha)V_R+ R^3\sin(\alpha)V_I)\\
        &= A^3(e^1\wedge\omega)+R^3\cos(\alpha+\theta)V_R+ R^3\sin(\alpha+\theta)V_I.
    \end{align*}
    We have observed that the Laplace operator is independent of the angle $\alpha$ of the 3-form $\varphi$, that is, $\Delta_{T_\theta(\varphi)}=\Delta_\varphi$.
    In addition, it acts by scaling in the last two coordinates, and therefore it commutes with any linear transformation there. We conclude the proof by the computation
    \begin{equation*}
        \Delta_{T_\theta(\varphi)}(T_\theta(\varphi)) = (\Delta_\varphi\circ T_\theta)(\varphi) = (T_\theta\circ\Delta_\varphi)(\varphi).
    \end{equation*}
    The first part of the above equation is just a consequence of the fact that $(SO(7)/G_2)^{SU(3)}$ is a circle parametrised by $\alpha$.
    The second equality follows from the explicit form of the Laplacian.
\end{proof}

We now state the main result of this subsection.
\begin{theorem}\label{thm for SU(4)}
    The $G_2$-Laplace operator is an orientation-preserving bijection on $SU(4)$-invariant positive 3-forms on the 7-sphere and has exactly two circles of fixed points.
\end{theorem}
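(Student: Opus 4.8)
The plan is to read everything off the explicit formula~\eqref{SU(3)-invariant G_2-Laplacian} together with the parametrisation in Proposition~\ref{positive 3-forms for SU(4)}. First I would record that the $G_2$-Laplacian sends $\varphi = A^3(e^1\wedge\omega) + R^3\cos(\alpha)V_R + R^3\sin(\alpha)V_I$ to $\tfrac{16}{3}A^9 R^{-8}(e^1\wedge\omega) + 144 A^{-6}R^7(\cos(\alpha)V_R + \sin(\alpha)V_I)$, and note that since $\tfrac{16}{3}A^9 R^{-8}\neq 0$ and $144 A^{-6}R^7 > 0$ whenever $A \neq 0$ and $R > 0$, this image is again a positive $SU(4)$-invariant $3$-form by Proposition~\ref{positive 3-forms for SU(4)}. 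Thus the $G_2$-Laplacian is a genuine self-map of the space parametrised by $(\sigma,(a,R),\alpha) \in \mathbb{Z}_2\times(\mathbb{R}^+)^2\times S^1$, and in those coordinates it becomes $(\sigma, a, R, \alpha)\mapsto(\tilde\sigma, \tilde a, \tilde R, \tilde\alpha)$ with $\tilde\alpha = \alpha$, $\tilde\sigma = \sigma$ (because $A^9$ has the sign of $A$), $\tilde a = (16/3)^{1/3} a^3 R^{-8/3}$ and $\tilde R = 144^{1/3} a^{-2} R^{7/3}$. Since $\operatorname{dvol}_{\varphi} = A^3 R^4\, e^{1234567}$ by Proposition~\ref{positive 3-forms for SU(4)}, the orientation induced by $\varphi$ is exactly $\sigma = \operatorname{sign}(A)$, so $\tilde\sigma = \sigma$ is precisely the statement that the operator is orientation-preserving.

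Next I would prove bijectivity. As $\sigma$ and $\alpha$ are untouched, it suffices to show that $(a, R)\mapsto(\tilde a,\tilde R)$ is a bijection of $(\mathbb{R}^+)^2$. Passing to logarithmic coordinates $x=\ln a$, $y=\ln R$ turns this monomial map into an affine map of $\mathbb{R}^2$ whose linear part is $\left(\begin{smallmatrix} 3 & -8/3 \\ -2 & 7/3\end{smallmatrix}\right)$, of determinant $7 - \tfrac{16}{3} = \tfrac{5}{3} \neq 0$. Hence this affine map is an isomorphism of $\mathbb{R}^2$, so $(a, R)\mapsto(\tilde a, \tilde R)$ is a diffeomorphism of $(\mathbb{R}^+)^2$, which together with the first paragraph shows that the $G_2$-Laplacian is an orientation-preserving bijection.

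For the fixed points I would solve $\tilde a = a$ and $\tilde R = R$ with $\alpha$ left free, i.e.\ $\tfrac{16}{3}a^6 R^{-8} = 1$ and $144\, a^{-6}R^4 = 1$. Multiplying the two relations eliminates $a^6$ and forces a unique $R = R_0 > 0$ (one finds $R_0 = 4\cdot 3^{1/4}$), after which $a^6 = 144 R_0^4$ pins down the unique admissible $a = a_0 > 0$ (namely $a_0 = 4\sqrt{3}$). Hence the fixed-point set is $\{(\sigma, a_0, R_0, \alpha) : \sigma\in\mathbb{Z}_2,\ \alpha\in S^1\}$, a disjoint union of two circles, one for each orientation, which is exactly the assertion.

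I do not anticipate a real obstacle: the whole argument is just unpacking the monomials in~\eqref{SU(3)-invariant G_2-Laplacian}. The one genuinely useful device is linearising the monomial map $(a,R)\mapsto(\tilde a,\tilde R)$ by taking logarithms, which reduces both injectivity and surjectivity to the non-vanishing of a $2\times 2$ determinant. The only points that need a little care are checking that the image $3$-form remains positive (so that the operator really is a self-map of the space in question) and observing that the $S^1$-coordinate $\alpha$ is preserved verbatim, which is what makes the fixed-point locus two circles rather than two isolated points.
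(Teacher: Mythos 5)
Your proposal is correct and follows essentially the same route as the paper: both read everything off the explicit formula~\eqref{SU(3)-invariant G_2-Laplacian}, use the fact that the angle $\alpha$ (and the sign of $A$) is preserved to reduce to the monomial map $(a,R)\mapsto\bigl(\tfrac{16}{3}a^{9}R^{-8},144a^{-6}R^{7}\bigr)^{1/3}$ on $(\mathbb{R}^+)^2$, and then identify the fixed points, obtaining the two circles parametrised by $\alpha$ and the orientation. The only cosmetic differences are that you establish bijectivity by linearising in logarithmic coordinates (non-vanishing of the $2\times2$ exponent determinant) where the paper writes down the explicit inverse, and you solve the fixed-point equations directly where the paper exhibits a preserved line and invokes Remark~\ref{fixed points}; both variants are sound.
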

\begin{proof}
    Because of the lemma above, it is enough to show that the $G_2$-Laplacian is a bijection on the space of positive 3-forms with $\alpha=0$. This subspace is clearly preserved.
    From the explicit expression of the $G_2$-Laplacian \eqref{SU(3)-invariant G_2-Laplacian}, substituting $\alpha=0$, we obtain the operator
    \begin{equation*}
        \Delta_{(-)}(-)|_{\alpha=0}:(A,R)\mapsto \Big(\frac{16}{3}A^9R^{-8},144A^{-6}R^7\Big).
    \end{equation*}
    The 3-form is positive if and only if both of the coordinates $(A,R)$ are non-zero, and the orientation is given by the sign of the parameter $A$.
    Clearly, applying $\Delta_{(-)}(-)$ to a positive 3-form yields a positive 3-form, and in addition, the first parameter $\frac{16}{3}A^9R^{-8}$ has the same sign as $A$, hence the orientation is preserved.
    The inverse of the $G_2$-Laplacian is given by
    \begin{equation*}
        (X,Y)\mapsto \bigg(\bigg(\bigg(\frac{3X}{16}\bigg)^7\bigg(\frac{Y}{144}\bigg)^8\bigg)^\frac{1}{15}, \bigg(\bigg(\bigg(\frac{3X}{16}\bigg)^2\bigg(\frac{Y}{144}\bigg)^3\bigg)^\frac{1}{5}\bigg).
\end{equation*}
Regarding fixed points, it is enough to realise that the line generated by $(\sqrt[14]{27},1)$ is preserved and then use Remark \ref{fixed points} and the above lemma.
\end{proof}\noindent
\begin{cor}
    For every positive $SU(4)$-invariant 3-form $\mu$, there is a unique positive $SU(4)$-invariant solution $\varphi$ to the Poisson equation
    \begin{equation*}
        \Delta_\varphi\varphi=\mu.
    \end{equation*}
\end{cor}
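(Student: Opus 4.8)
The plan is to obtain this corollary as an essentially immediate consequence of Theorem~\ref{thm for SU(4)}. By Proposition~\ref{positive 3-forms for SU(4)}, the positive $SU(4)$-invariant 3-forms on $S^7$ are precisely the image of the diffeomorphic parametrisation by $(\sigma,(a,R),\alpha)\in\mathbb{Z}_2\times(\mathbb{R}^+)^2\times S^1$, so the given $\mu$ is one such form; Theorem~\ref{thm for SU(4)} then asserts that $\varphi\mapsto\Delta_\varphi\varphi$ restricts to a bijection of this set onto itself. Applying the theorem, $\mu$ lies in the range of the $G_2$-Laplacian and has exactly one preimage $\varphi$ inside the set, which is the asserted existence and uniqueness of a positive $SU(4)$-invariant solution. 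To see that uniqueness in fact holds among \emph{all} $SU(4)$-invariant solutions, I would note that by Proposition~\ref{positive 3-forms for SU(4)} every $SU(4)$-invariant 3-form outside the parametrised family has $R=0$, hence is degenerate, so $\Delta_\varphi\varphi$ is not defined on it and it cannot solve the Poisson equation.

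For a constructive version of the statement I would transport the explicit inverse from the proof of Theorem~\ref{thm for SU(4)}. Using the preceding rotation lemma together with the identity $\Delta_{T_\theta\psi}(T_\theta\psi)=T_\theta(\Delta_\psi\psi)$, the problem reduces to the slice $\{\alpha=0\}$, on which $\Delta_{(-)}(-)$ is the map $(A,R)\mapsto(\tfrac{16}{3}A^9R^{-8},\,144A^{-6}R^7)$ whose inverse is written out in that proof; positivity of $\mu$ means both of its coordinates are nonzero, which is exactly the condition for this inverse to be well-defined and to return a positive 3-form, and the solution $\varphi$ for general $\mu$ is then recovered by applying $T_\theta$.

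I do not expect a genuine obstacle here: all of the analytic and algebraic content has already been established in Theorem~\ref{thm for SU(4)} and Proposition~\ref{positive 3-forms for SU(4)}. The only point that needs a word of justification, and which one might call the crux, is that a \emph{positive} right-hand side $\mu$ automatically lands in the parametrised family on which the inverse of the $G_2$-Laplacian is defined, and this is immediate from Proposition~\ref{positive 3-forms for SU(4)}.
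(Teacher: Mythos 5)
Your argument is exactly how the paper treats this statement: the corollary is recorded without separate proof as an immediate consequence of Theorem~\ref{thm for SU(4)} and the characterisation of positive forms in Proposition~\ref{positive 3-forms for SU(4)}, which is precisely your reasoning (your constructive remark just repackages the explicit inverse from the theorem's proof). The only small slip is that a form outside the parametrised family may instead have $A=0$ rather than $R=0$, but either way it is degenerate, so nothing is affected.
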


\vspace{1mm}

\subsection{$Sp(2)\cdot Sp(1)$- and $Sp(2)\cdot U(1)$-symmetry}\label{Sp(2)U(1) subsection}

For the symmetry $Sp(2)\cdot Sp(1)$, we can consider the reductive decomposition
\begin{align*}
    \mathfrak{sp}(2) \oplus \mathfrak{sp}(1) &= \mathfrak{m}_4\oplus\mathfrak{m}_3\oplus\mathfrak{sp}(1)\oplus \mathfrak{sp}(1) =\\
    &=\begin{pNiceArray}{ccc}
  0 & 0 & 0 \\
  0 & 0 & -\mathbb{H}^*\\
  0 & \mathbb{H} & 0
\end{pNiceArray} \oplus \begin{pNiceArray}{ccc}
  0 & 0 & 0 \\
  0 & 0 & 0\\
  0 & 0 & \operatorname{Im}(\mathbb{H})
\end{pNiceArray} \oplus \begin{pNiceArray}{ccc}
  0 & 0 & 0 \\
  0 & \mathfrak{sp}(1) & 0\\
  0 & 0 & 0
\end{pNiceArray} \oplus \begin{pNiceArray}{ccc}
  \mathfrak{sp}(1) & 0 & 0 \\
  0 & 0 & 0\\
  0 & 0 & \mathfrak{sp}(1)
\end{pNiceArray},
\end{align*}
where all the matrix entries are quaternions.
Observe that the second $\mathfrak{sp}(1)$ is embedded diagonally into $\mathfrak{sp}(2)\oplus\mathfrak{sp}(1)$ (otherwise the action would not be free).

\begin{remark}
    The isotropy representation of $(Sp(2)\cdot Sp(1))/(Sp(1)\cdot Sp(1))$ is given by two double covers. Firstly, $Sp(1)\cdot Sp(1)$ double covers $SO(\mathbb{H})\cong SO(4)$ via the map $(q_1,q_2)\mapsto (x\mapsto q_1xq_2^{-1})$, and secondly, $Sp(1)$ double covers $SO(\operatorname{Im}(\mathbb{H}))\cong SO(3)$ via the map $q\mapsto (x\mapsto qxq^{-1})$, where in both maps the multiplication is in quaternions. The space $\mathfrak{m}_4$ (resp. $\mathfrak{m}_3$) is the standard representations of $SO(4)$ (resp. $SO(3)$) via the first (resp. second) double cover.
\end{remark}

We have a similar reductive decomposition in the case with the symmetry $Sp(2)\cdot U(1)$, except we further decompose $\operatorname{Im}(\mathbb{H})= \boldsymbol{i}\mathbb{R} \oplus \boldsymbol{j}\mathbb{R} \oplus \boldsymbol{k}\mathbb{R}$ and obtain
\begin{align*}
&\mathfrak{sp}(2)\oplus \mathfrak{u}(1) = \mathfrak{m}_4\oplus\mathfrak{m}_2\oplus\mathfrak{m}_1\oplus\mathfrak{sp}(1)\oplus \mathfrak{u}(1)=\\
&= \begin{pNiceArray}{ccc}
  0 & 0 & 0 \\
  0 & 0 & -\mathbb{H}^*\\
  0 & \mathbb{H} & 0
\end{pNiceArray}\oplus \begin{pNiceArray}{ccc}
  0 & 0 & 0 \\
  0 & 0 & 0\\
  0 & 0 & \boldsymbol{j}\mathbb{R}\oplus \boldsymbol{k}\mathbb{R}
  \end{pNiceArray}\oplus \begin{pNiceArray}{ccc}
  0 & 0 & 0 \\
  0 & 0 & 0\\
  0 & 0 & \boldsymbol{i}\mathbb{R}
\end{pNiceArray}\oplus \begin{pNiceArray}{ccc}
  0 & 0 & 0 \\
  0 & \mathfrak{sp}(1) & 0\\
  0 & 0 & 0
\end{pNiceArray}\oplus \begin{pNiceArray}{ccc}
  \boldsymbol{i}\mathbb{R} & 0 & 0 \\
  0 & 0 & 0\\
  0 & 0 & \boldsymbol{i}\mathbb{R}
\end{pNiceArray}.
\end{align*}
\begin{remark}
    The isotropy representation of $(Sp(2)\cdot U(1))/(Sp(1)\cdot U(1))$ is the same as in the case of $Sp(2)\cdot Sp(1)$, except that we act on $\text{Im}(\mathbb{H})$ only by $U(1)\leq Sp(1)$. Because of that, the second summand in the case $Sp(2)\cdot Sp(1)$, $\mathfrak{m}_3$, splits further into a trivial one ($\mathfrak{m}_1$) and a 2-dimensional one ($\mathfrak{m}_2$), where the action is given by the trivial representation and by rotation ($SO(2)\cong U(1)$), respectively.
\end{remark}

We can choose $\mathfrak{m}$ to be
\begin{equation*}
    \mathfrak{m} = \begin{pNiceArray}{ccc}
  0 & 0 & 0 \\
  0 & 0 & -\mathbb{H}^*\\
  0 & \mathbb{H} & 0
\end{pNiceArray} \oplus \begin{pNiceArray}{ccc}
  0 & 0 & 0 \\
  0 & 0 & 0\\
  0 & 0 & \operatorname{Im}(\mathbb{H})
\end{pNiceArray}.
\end{equation*}
This $\mathfrak{m}$ is a subspace of $\mathfrak{sp}(2)\leq \mathfrak{sp}(2)\oplus\mathfrak{u}(1)\leq\mathfrak{sp}(2)\oplus\mathfrak{sp}(1)$; however, it is $Sp(1)\cdot Sp(1)$-invariant, with respect to the adjoint representation. Because of this, it can serve as the complement of the Lie subalgebra of the isotropy subgroup in both of the cases considered in this subsection. We choose the basis to be
\begin{align*}
    e_1 =\begin{pNiceArray}{ccc}
  0 & 0 & 0 \\
  0 & 0 & 0\\
  0 & 0 & \boldsymbol{i}
\end{pNiceArray}, &&
e_{2} = \begin{pNiceArray}{ccc}
  0 & 0 & 0 \\
  0 & 0 & 0\\
  0 & 0 & \boldsymbol{j}
\end{pNiceArray}, &&
e_{3} = \begin{pNiceArray}{ccc}
  0 & 0 & 0 \\
  0 & 0 & 0\\
  0 & 0 & -\boldsymbol{k}
\end{pNiceArray}, &&
e_{3+l} = \begin{pNiceArray}{ccc}
  0 & 0 & 0 \\
  0 & 0 & -\bar{\zeta_l}\\
  0 & \zeta_l & 0
\end{pNiceArray},
\end{align*}
where by $\zeta_l$ for $l\in\{1,2,3,4\}$, we mean the real basis $\{1,\boldsymbol{i},\boldsymbol{j},\boldsymbol{k}\}$ of the space of quaternions and the bar is the conjugation on $\mathbb{H}$.

\begin{remark}\label{Sp(2)U(1) is subgroup of SU(3)}
The reason we are choosing the third basis vector ($e_3$) to have an entry $-\boldsymbol{k}$ instead of $\boldsymbol{k}$ is so that the image of the embedding of $Sp(1)\cdot U(1)$ into $SO(7)$ via the adjoint representation is in $G_2$ as we have defined it.
In other words, if we chose a basis with entries corresponding to the set $\{\boldsymbol{i},\boldsymbol{j},\boldsymbol{k},1,\boldsymbol{i},\boldsymbol{j},\boldsymbol{k}\}$, we would not stabilise the standard 3-form $\varphi_0$ and the image of the adjoint representation would be in a conjugate subgroup to $G_2$.
Also, with this choice of basis, we have $Sp(1)\cdot U(1)\leq SU(3)\leq G_2$. Such an inclusion is, of course, not true for the subgroup $Sp(1)\cdot Sp(1)\cong SO(4)\leq G_2$ which is, in fact, a maximal subgroup of $G_2$.
\end{remark}
From now on, we will work only with the space $(Sp(2)\cdot U(1))/(Sp(1)\cdot U(1))$ and at the end of the computation we will reflect on the parts where there would be necessary changes for the bigger group. Observe that every $Sp(2)\cdot Sp(1)$-invariant tensor will also be $Sp(2)\cdot U(1)$-invariant.

The $Sp(2)\cdot U(1)$-invariant indecomposable forms of degree at most $3$ on $S^7$ are (cf. Table \ref{number of inv. k-forms for spheres}): 
\begin{gather}
    \begin{align*}    
    e^1, &&
    e^{23}, &&
    \omega_1 = e^{45}+e^{67}, &&
    V_R = e^2\wedge\omega_2+e^3\wedge\omega_3=e^{246}-e^{257}-e^{347}-e^{356},
    \end{align*} \\
    V_I = e^3\wedge\omega_2-e^2\wedge\omega_3=e^{346}-e^{357}+e^{247}+e^{256}.
    \label{primitive Sp(1)U(1) invariant forms}
\end{gather}
The form $e^{23}$ is a volume element on $\mathfrak{m}_2$, the form $\omega_1$ is a K\"ahler form on $\mathfrak{m}_4\cong\mathbb{C}^2$, and the last two are the real and imaginary part of the complex volume form on $\mathfrak{m}_4\oplus\mathfrak{m}_2\cong \mathbb{C}^3$.
Therefore,
\begin{equation}\label{basis for Sp(1)U(1) 3-forms}
    \{e^1\wedge e^{23}, e^1\wedge\omega_1, V_R, V_I\}
\end{equation}
is a basis for the space of invariant 3-forms. Again, this basis satisfies the conclusion of Corollary \ref{Basis with resp star - corollary}, and we will denote it $\beta$. Let us choose the parametrisation of the space of $Sp(2)\cdot U(1)$-invariant 3-forms by 
\begin{equation}\label{parametrisation for Sp(2)U(1) invariant 3-forms}
    \varphi = A^3e^1\wedge e^{23} + B^3e^1\wedge\omega_1 + R^3\cos(\alpha)V_R+R^3\sin(\alpha)V_I,
\end{equation}
where $A,B \in \mathbb{R}$, $R\in \mathbb{R}^+\cup\{0\}$ and $\alpha\in [0,2\pi)$. From \eqref{defining equation for G2} we obtain the equation
\begin{align*}
    g_\varphi(e_i,e_j) \operatorname{dvol}_\varphi = \begin{cases}
        A^3B^6e^{1234567}, &\text{if } i=j=1,\\
        A^3R^6e^{1234567}, &\text{if } i=j\in\{2,3\},\\
        B^3R^6e^{1234567}, &\text{if } i=j\in\{4,5,6,7\},\\
        0, &\text{otherwise}.
    \end{cases}
\end{align*}
Again, it was expected that we would obtain a diagonal metric because of our choice of basis. We compute the determinant
\begin{equation*}
    |\det g_{ij}|^\frac{7}{2}(\det g_{ij}) = A^9B^{18}R^{36}.
\end{equation*}
Therefore, we obtain the conditions for non-degeneracy as $A,B\neq 0$ and $R\neq  0$. From the determinant we immediately arrive at
\begin{align*}
    \operatorname{dvol}_\varphi=AB^2R^4e^{1234567}, &&
    g_\varphi = A^2B^4R^{-4}(e^1\otimes e^1) + A^2B^{-2}R^2\sum_{i=2}^{3}e^i\otimes e^i + A^{-1}BR^2\sum_{i=4}^{7}e^i\otimes e^i.
\end{align*}
From here we can read off the condition for the positivity of $\varphi$, and it is that $A$ and $B$ must have the same sign; otherwise, the signature of the metric would be $(3,4)$. We have proved the following proposition (cf. Remark \ref{decomposition of the space of G2-structures})

\begin{proposition}\label{positive 3-forms for Sp(2)U(1)}
The map taking $(\sigma, (a,b, R), \alpha)\in \mathbb{Z}_2\times (\mathbb{R}^+)^3\times S^1$ to the 3-form
    \begin{equation*}
        \varphi=A^3e^1\wedge e^{23} + B^3e^1\wedge\omega_1 + R^3\cos(\alpha)V_R+R^3\sin(\alpha)V_I
    \end{equation*}
where $A=\sigma a$, $B=\sigma b$ and $e^1\wedge e^{23}, e^1\wedge\omega,V_R$ and $V_I$ are as in \eqref{primitive Sp(1)U(1) invariant forms}, is a diffeomorphism onto the space of $Sp(2)\cdot U(1)$-invariant positive 3-forms on the 7-sphere. The metric induced by the $G_2$-structure $\varphi$ is given by
    \begin{equation*}
        g_\varphi = A^2B^4R^{-4}(e^1\otimes e^1) + A^2B^{-2}R^2\sum_{i=2}^{3}e^i\otimes e^i + A^{-1}BR^2\sum_{i=4}^{7}e^i\otimes e^i.
    \end{equation*}
\end{proposition}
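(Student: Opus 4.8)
The plan is to assemble the three assertions of the proposition --- that the four displayed forms span the space of invariant 3-forms, that $g_\varphi$ has the stated shape, and that the parametrization is a diffeomorphism onto the positive locus --- out of the computations carried out just above the statement, organising them rather than rederiving them.

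First I would check that $\beta = \{e^1\wedge e^{23},\, e^1\wedge\omega_1,\, V_R,\, V_I\}$ is a basis of $\Lambda^3(\mathfrak{m}^*)^{Sp(1)\cdot U(1)}$. Linear independence is immediate because, written in the monomials $e^{ijk}$, the four forms have pairwise disjoint supports ($e^{123}$; $e^{145}, e^{167}$; $e^{246}, e^{257}, e^{347}, e^{356}$; $e^{346}, e^{357}, e^{247}, e^{256}$), and the count $|\beta| = 4$ matches $\dim\Omega^3(S^7)^{Sp(2)\cdot U(1)} = 4$ from Table~\ref{number of inv. k-forms for spheres} (alternatively one reproves this dimension via \eqref{number of G2 structures} and the isotropy decomposition). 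Hence every invariant 3-form is $\varphi = A^3 e^1\wedge e^{23} + B^3 e^1\wedge\omega_1 + x V_R + y V_I$ for unique reals $A^3, B^3, x, y$, and rewriting $(x,y) = (R^3\cos\alpha,\, R^3\sin\alpha)$ is a legitimate change of coordinates that, on the locus $R>0$, determines $(R,\alpha)$ uniquely.

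Next, to obtain the metric I substitute this $\varphi$ into \eqref{defining equation for G2} and compute $\frac{1}{6}\iota_{e_i}\varphi\wedge\iota_{e_j}\varphi\wedge\varphi$. Since $e_1,\dots,e_7$ respects the splitting of $\mathfrak{m}$ into pairwise inequivalent irreducibles $\mathfrak{m}_4\oplus\mathfrak{m}_2\oplus\mathfrak{m}_1$, Schur's lemma (exactly as in Lemma~\ref{Invariant basis for * - lemma}) forces the associated symmetric bilinear form to be diagonal in this basis, so only one value of $i$ per block needs to be computed; this wedge-product bookkeeping is the sole genuinely computational step and it is routine. The device for pinning down the a priori unknown volume form is to regard the resulting identities as an equation between $7\times 7$ matrices with $7$-form entries and take determinants: this gives $|\det g_{ij}|^{7/2}(\det g_{ij}) = A^9 B^{18} R^{36}$, which at once yields the non-degeneracy conditions $A, B, R \neq 0$, the volume form $\operatorname{dvol}_\varphi = A B^2 R^4\, e^{1234567}$, and then, dividing it back through, the displayed formula for $g_\varphi$.

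Finally, positivity is read off the three diagonal coefficients: $A^2 B^4 R^{-4}$ and $A^2 B^{-2} R^2$ are positive wherever defined, while $A^{-1} B R^2$ is positive exactly when $A$ and $B$ share a sign (when they do not, the metric has signature $(3,4)$). So the positive 3-forms are precisely those with $|A|, |B|, R \in \mathbb{R}^+$ and $\operatorname{sign}(A) = \operatorname{sign}(B) =: \sigma$, which is exactly the image of the parametrization with $a = |A|$, $b = |B|$. The parametrization is polynomial, hence smooth; it is a bijection onto the positive locus by the uniqueness established above (recover $\sigma$ from the sign of the $e^1\wedge e^{23}$-coefficient, $a, b$ as cube roots of absolute values, $(R,\alpha)$ from the polar form of $(x,y)$); and the inverse, built from cube roots and $\arctan$, is smooth on the open set $\{R>0\}$, which is where the $\alpha$-coordinate is defined and the only place the inverse could fail to be smooth. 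I do not expect a genuine obstacle: the content is entirely in the bilinear-form computation and the sign analysis, the one point worth stating explicitly being why --- unlike the $SU(4)$ case --- indefinite-signature invariant 3-forms do occur here, namely precisely in the $\operatorname{sign}(A)\neq\operatorname{sign}(B)$ regime just noted.
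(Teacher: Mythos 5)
Your proposal is correct and follows essentially the same route as the paper: the same invariant basis, the same diagonal-by-Schur metric computation from \eqref{defining equation for G2}, the same determinant trick yielding $|\det g_{ij}|^{7/2}\det(g_{ij})=A^9B^{18}R^{36}$, the volume form and $g_\varphi$, and the same sign analysis ($A$, $B$ of equal sign, else signature $(3,4)$). The only difference is that you spell out the smoothness of the parametrization and its inverse explicitly, which the paper leaves implicit.
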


Next, we are going to compute the Hodge star $*_\varphi$ with respect to the basis \eqref{basis for Sp(1)U(1) 3-forms} for invariant 3-forms and the basis $\{*(e^1\wedge e^{23}), *(e^1\wedge\omega_1), *V_R, *V_I\}$, where we denote by $*$ the Hodge star with respect to the 3-form given by $A=B=R=1$ and $\alpha=0$.
Formula \eqref{Hodge star with coefficients} yields
\begin{align*}
    *_{g_\varphi}(e^1\wedge e^{23}) &= A^{-1}B^{-2}R^2 \cdot A^{-2}B^2R^{-2} \cdot A^{-2}B^2R^4*(e^1\wedge e^{23}) = A^{-5}B^{2}R^{4}*(e^1\wedge e^{23}),\\
    *_{g_\varphi}(e^1\wedge\omega_1) &= A^{-1}B^{-2}R^2 \cdot A^2B^{-2}R^2 \cdot 1*(e^1\wedge\omega_1) = AB^{-4}R^4*(e^1\wedge\omega_1),\\
    *_{g_\varphi}V_R&=AB^2R^{-2} \cdot 1 \cdot 1 * V_R = AB^2R^{-2}*V_R,\\
    *_{g_\varphi}V_I&=AB^2R^{-2} \cdot 1 \cdot 1 * V_I = AB^2R^{-2}*V_I.
\end{align*}
We will also need the Hodge star $*_\varphi$ on 2-forms, where the same formula gives
\begin{align*}
    *_\varphi\omega_1 &= (AB^2R^{-1} \cdot A^2B^{-2}R^2 \cdot 1)*\omega_1 = A^3*\omega_1,\\
    *_\varphi e^{23} &= (AB^2R^{-2} \cdot A^{-2}B^2R^{-2} \cdot A^{-2}B^2R^4)*e^{23} = A^{-3}B^6*e^{23}.
\end{align*}
We compute the commutators now. In this case, there are many more commutators than before. For a more concise description of the commutators, we are going to use the notation 
\begin{align*}
    e_{i,0}=e_1, &&
    e_{j,0}=e_2, &&
    e_{-k,0}=e_3, &&
    e_{0,\zeta_l}=e_{3+l}.
\end{align*}
Furthermore, we consider the indices linear, i.e., by $e_{2i,0}$ we mean $2e_{i,0}$.
With this notation the projections of the commutators onto $\mathfrak{m}$ are
\begin{align*}
    [e_{\alpha,0},e_{\beta,0}]_\mathfrak{m} = [e_{\alpha,0},e_{\beta,0}] = e_{[\alpha,\beta],0}, &&
    [e_{\alpha,0},e_{0,\beta}]_\mathfrak{m} = [e_{\alpha,0},e_{0,\beta}] = e_{0,\alpha\cdot\beta},
\end{align*}
$$[e_{0,\alpha},e_{0,\beta}]_\mathfrak{m} = e_{-\alpha\cdot\bar{\beta}+\bar{\alpha}\cdot\beta,0}=\begin{cases}
        0, &\text{if } \alpha=\beta,\\
        e_{-2\alpha\bar{\beta},0}, &\text{otherwise},
\end{cases}$$
where $\alpha,\beta\in\mathbb{H}$ and the operations $[\cdot,\cdot]$ and $\cdot$ in the lower indices are the operations on the quaternions.
From the commutators we are now able to compute the differentials of the indecomposable invariant forms of degree at most 3 using \eqref{differential in homogeneus space}:
\begin{align*}
    de^1 = 2e^{23}-2\omega_1, &&
    d\omega_1 = -2V_I, &&
    de^{23}=-2V_I, &&
    dV_R = -4*(e^1\wedge\omega_1) -8*(e^{123}),&&
    dV_I = 0.
\end{align*}
Using these formulae as well as the relations 
\begin{align*}
    *(e^1\wedge e^{23})=e^{23}\wedge\omega_1, &&
    *(e^1\wedge e^{23})=\frac{1}{2}\omega_1^{\wedge2}, &&
    *V_R=-e^1\wedge V_I,
\end{align*}
we obtain the differentials of all invariant 3-forms:
\begin{align*}
    d(e^1\wedge e^{23})&= -2*(e^1\wedge \omega_1) - 2*V_R, &
    dV_R&= -4*(e^1\wedge\omega_1) - 8*(e^1\wedge e^{23}),\\
    d(e^1\wedge\omega_1)&= 2*(e^1\wedge\omega_1) - 4*(e^1\wedge e^{23}) - 2*V_R, &
    dV_I&=0.
\end{align*}
\begin{remark}\label{coclosed form are defined regardless of the metric}
    Because $d$ is an isomorphism from $span\{e^1\wedge e^{23}, e^1\wedge\omega_1, V_R\}$ to $span\{{*(e^1\wedge e^{23})}, {*(e^1\wedge\omega_1)}, *V_R\}$, and because all invariant Hodge stars have diagonal matrices for our choice of a basis (see Corollary \ref{Basis with resp star - corollary}), the 3-form is coclosed with respect to one invariant metric if and only if it is coclosed with respect to every invariant metric.
\end{remark}
We will also need that
\begin{equation*}
    d*V_I=d(e^1\wedge V_R)=2(e^{23}-\omega_1)\wedge V_R - 4e^1\wedge e^{23}\wedge V_R - 4e^1\wedge\omega_1^{\wedge2} = -4*\omega_1-8*e^{23},
\end{equation*}
where we have used that 
\begin{align*}
    *V_I=e^1\wedge V_R, &&
    2(e^{23}-\omega_1)\wedge V_R=0, &&
    4e^1\wedge e^{23}\wedge V_R=*\omega_1, &&
    e^1\wedge\omega_1^{\wedge2} = 2*e^{23}.
\end{align*}

\vspace{1mm}

We are now ready to compute the $G_2$-Laplace operator.
First, note that all the basis elements of $\beta$ are either closed ($V_I$) or coclosed ($e^{123},e^1\wedge\omega_1,V_R$), and the Laplacian on closed (resp. coclosed) forms reduces to $\Delta=d\delta$ (resp. to $\Delta = \delta d$). Let us fix $\varphi$ as in \eqref{parametrisation for Sp(2)U(1) invariant 3-forms}.
We will compute the Laplacian given by this 3-form on the closed 3-form $V_I$. We find
\begin{align*}
    \Delta_\varphi V_I &= -d*_\varphi d*_\varphi V_I = -d*_\varphi d(AB^2R^{-2}*V_I) = -AB^2R^{-2}d *_\varphi (-4*\omega_1-8*e^{23})\\
    &= -AB^2R^{-2}d(-4A^{-3}\omega_1-8A^3B^{-6}e^{23}) = -AB^2R^{-2}(8A^{-3} + 16A^3B^{-6})V_I\\
    &= -(8A^{-2}B^2 + 16A^4B^{-4})R^{-2}V_I.
\end{align*}
On the other hand, when we look at the operator restricted to the coclosed forms (coclosedness is well-defined regardless of the metric; see Remark \ref{coclosed form are defined regardless of the metric}), with respect to the basis $\{e^1\wedge e^{23},e^1\wedge\omega_1,V_R\}$ we find
\begin{equation}\label{G2-Laplacian on the coclosed part}
\begin{aligned}
    \Delta_\varphi|_{\operatorname{Ker}\delta_\varphi} &= *_\varphi d *_\varphi d =
    \left(
    \begin{pNiceArray}{ccc}
  A^5B^{-2}R^{-4} & 0 & 0 \\
  0 & A^{-1}B^4R^{-4} & 0\\
  0 & 0 & A^{-1}B^{-2}R^2
\end{pNiceArray}
\begin{pNiceArray}{ccc}
  0 & -4 & -8 \\
  -2 & 2 & -4\\
  -2 & -2 & 0   
\end{pNiceArray}\right)^2 =\\
&=\begin{pNiceArray}{ccc}
  8C_1C_2 + 16C_1C_3 & -8C_1C_2 + 16C_1C_3 & 16C_1C_2 \\
  -4C_2^2 + 8C_2C_3 & 8C_1C_2 + 4C_2^2 + 8C_2C_3 & 16C_1C_2 - 8C_2^2\\
  4C_2C_3 & +8C_1C_3 - 4C_2C_3 & 16C_1C_3 + 8C_2C_3
\end{pNiceArray},
\end{aligned}
\end{equation}
where $C_i$ represent the elements on the diagonal of the first matrix. That is,
\begin{align*}
    C_1C_2 &= A^4B^2R^{-8}, &
    C_1C_3 &= A^4B^{-4}R^{-2}, &
    C_2C_3 &= A^{-2}B^2R^{-2}, &
    C_2^2 &= A^{-2}B^8R^{-8}.
\end{align*}
Putting everything together, for a general 3-form given by \eqref{parametrisation for Sp(2)U(1) invariant 3-forms} and a general
\begin{equation*}
    \mu = Xe^1\wedge e^{23} + Ye^1\wedge\omega_1 + Q\cos(\beta)V_R+Q\sin(\beta)V_I,
\end{equation*}
the equation $\Delta_\varphi\varphi=\mu$ is equivalent to the system of equations
\begin{equation}\label{Poisson eq for Sp(2)U(1)}
\begin{split}
    8A^7B^2R^{-8} + 16A^7B^{-4}R^{-2} - 8A^4B^5R^{-8} + 16A^4B^{-1}R^{-2} + 16A^4B^2R^{-5}\cos(\alpha) &= X,\\
    - 4AB^8R^{-8} + 8AB^2R^{-2} + 8A^4B^5R^{-8} + 4A^{-2}B^{11}R^{-8} + 8A^{-2}B^5R^{-2} +\\
    + 16A^4B^2R^{-5}\cos(\alpha) - 8A^{-2}B^8R^{-5}\cos(\alpha) &= Y,\\
    4AB^2R^{-2} + 8A^4B^{-1}R^{-2} - 4A^{-2}B^5R^{-2} + 16A^4B^{-4}R\cos(\alpha) + 8A^{-2}B^2R\cos(\alpha) &= Q\cos(\beta),\\
    -8A^{-2}B^2R\sin(\alpha) - 16A^4B^{-4}R\sin(\alpha) &= Q\sin(\beta).
\end{split}
\end{equation}
We first discuss the case of $Sp(2)\cdot Sp(1)$. With the same basis \eqref{basis for Sp(1)U(1) 3-forms} there are only two $Sp(2)\cdot Sp(1)$-invariant 3-forms $e^{123}$ and $\sum_{i=1}^3 e^i\wedge\omega_1$. This implies that all of the computations above work exactly the same, except we have extra conditions $B^3=R^3\cos(\alpha)$ and $R^3\sin(\alpha)=0$.
We obtain the following result.

\begin{proposition}\label{positive 3-forms for Sp(2)Sp(1)}
The map taking $(\sigma, (a,b))\in \mathbb{Z}_2\times (\mathbb{R}^+)^2$ to the 3-form
    \begin{equation*}
        \varphi=A^3e^1\wedge e^{23} + B^3\sum_{i=1}^3 e^i\wedge\omega_1
    \end{equation*}
where $A=\sigma a$ and $B=\sigma b$, is a diffeomorphism onto the space of $Sp(2)\cdot Sp(1)$-invariant positive 3-forms on the 7-sphere. The metric induced by the $G_2$-structure $\varphi$ is given by
    \begin{equation*}
        g_\varphi = A^2\sum_{i=1}^{3}e^i\otimes e^i + A^{-1}B^3\sum_{i=4}^{7}e^i\otimes e^i.
    \end{equation*}
\end{proposition}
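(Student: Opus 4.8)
The plan is to deduce this proposition by specialising the $Sp(2)\cdot U(1)$ analysis already in hand. Since $Sp(1)\cdot U(1)\leq Sp(1)\cdot Sp(1)$, every $Sp(2)\cdot Sp(1)$-invariant $3$-form is in particular $Sp(2)\cdot U(1)$-invariant, hence of the form \eqref{parametrisation for Sp(2)U(1) invariant 3-forms}; and, as recorded just before the statement, imposing the extra $Sp(1)\cdot Sp(1)$-invariance amounts to the two constraints $B^3=R^3\cos\alpha$ and $R^3\sin\alpha=0$. First I would unwind these: they force $\sin\alpha=0$, so $\cos\alpha=\pm1$ and $R=|B|$, and then $R^3\cos\alpha\,V_R=B^3V_R$, so that $\varphi=A^3\,e^1\wedge e^{23}+B^3(e^1\wedge\omega_1+V_R)$. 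A direct check (equivalently, comparison with $\varphi_0$) shows $e^1\wedge\omega_1+V_R=\sum_{i=1}^3 e^i\wedge\omega_i$, where $\omega_2,\omega_3$ are the companion $2$-forms implicit in the definitions of $V_R,V_I$ in \eqref{primitive Sp(1)U(1) invariant forms}; this is precisely the two-parameter family in the statement, and by Table \ref{number of inv. k-forms for spheres} it exhausts $\Omega^3(S^7)^{Sp(2)\cdot Sp(1)}$.

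Next I would read off the induced metric and the positivity condition. Substituting $R^2=B^2$ and $R^4=B^4$ into the metric of Proposition \ref{positive 3-forms for Sp(2)U(1)} collapses its first two blocks and gives $g_\varphi=A^2\sum_{i=1}^3 e^i\otimes e^i+A^{-1}B^3\sum_{i=4}^7 e^i\otimes e^i$, as claimed; non-degeneracy ($A,B\neq0$) follows from that specialisation, or directly from \eqref{defining equation for G2} exactly as in the $Sp(2)\cdot U(1)$ case. This metric is positive definite iff $A^{-1}B^3>0$, i.e. iff $A$ and $B$ have the same sign, and it has signature $(3,4)$ otherwise. When $A,B$ share a sign, put $\sigma=\operatorname{sign}(A)=\operatorname{sign}(B)\in\mathbb{Z}_2$, $a=|A|$, $b=|B|$, so $A=\sigma a$, $B=\sigma b$ with $(\sigma,(a,b))\in\mathbb{Z}_2\times(\mathbb{R}^+)^2$.

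Finally I would verify that $(\sigma,(a,b))\mapsto\varphi$ is a diffeomorphism onto the positive invariant $3$-forms: it is manifestly smooth, and its inverse reads off the coefficients of $e^1\wedge e^{23}$ and $\sum_i e^i\wedge\omega_i$ in the $2$-dimensional space $\Omega^3(S^7)^{Sp(2)\cdot Sp(1)}$, extracts real cube roots to recover $A,B$, and returns $(\operatorname{sign}(A),(|A|,|B|))$, which is smooth on the locus $A,B\neq0$. This is consistent with the general decomposition \eqref{decomposition of the space of G2-structures}, since here $(SL(7)/SO(7))^{Sp(2)\cdot Sp(1)}$ is one-dimensional (the relative scale of the two metric blocks) and $(SO(7)/G_2)^{Sp(2)\cdot Sp(1)}$ is a point.

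The only genuinely non-routine input is the identification of $\Omega^3(S^7)^{Sp(2)\cdot Sp(1)}$ used in the first step, equivalently the claim that $Sp(2)\cdot Sp(1)$-invariance is exactly $B^3=R^3\cos\alpha$ and $R^3\sin\alpha=0$. Proving this cleanly requires being explicit about how the diagonally embedded $\mathfrak{sp}(1)$ acts as $\mathfrak{so}(3)$ on $\mathfrak{m}_3$ and, through the double cover $Sp(1)\cdot Sp(1)\to SO(\mathbb{H})$, on the self-dual $2$-forms of $\mathfrak{m}_4$, and checking that these two $SO(3)$-actions are intertwined by the chosen basis, so that the unique invariant combination of $e^1\wedge\omega_1,V_R,V_I$ is $\sum_i e^i\wedge\omega_i$; everything after that is substitution into formulas already computed.
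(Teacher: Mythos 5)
Your proposal is correct and follows essentially the same route as the paper: the paper obtains this proposition precisely by specialising the $Sp(2)\cdot U(1)$ computations (Proposition \ref{positive 3-forms for Sp(2)U(1)}) under the extra constraints $B^3=R^3\cos\alpha$ and $R^3\sin\alpha=0$, with the two-dimensionality of $\Omega^3(S^7)^{Sp(2)\cdot Sp(1)}$ coming from the isotropy decomposition as recorded in Table \ref{number of inv. k-forms for spheres}. Your unwinding of the constraints, the substitution $R^2=B^2$, $R^4=B^4$ into the metric, and the positivity/diffeomorphism discussion match the paper's (largely implicit) argument.
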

All invariant 3-forms are now coclosed, and the Poisson equation becomes
\begin{equation}\label{Poisson eq for Sp(2)Sp(1)}
\begin{split}
    24A^7B^{-6} + 24A^4B^{-3} &= X,\\
    4A + 24A^4B^{-3} + 4A^{-2}B^3 &= Y.
\end{split}
\end{equation}
As our next theorem shows, this system is always solvable.
\begin{theorem}\label{thm for Sp(2)Sp(1)}
The $G_2$-Laplace operator is an orientation-preserving bijection on $Sp(2)\cdot Sp(1)$-invariant positive 3-forms on the 7-sphere and has exactly two fixed points.
\end{theorem}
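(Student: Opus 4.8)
The plan is to read off the $G_2$-Laplacian in the explicit coordinates of Proposition~\ref{positive 3-forms for Sp(2)Sp(1)} and analyse the rational map~\eqref{Poisson eq for Sp(2)Sp(1)} directly. Writing a positive $Sp(2)\cdot Sp(1)$-invariant $3$-form as $\varphi = A^3 e^1\wedge e^{23} + B^3\sum_{i=1}^3 e^i\wedge\omega_1$ with $A,B$ of the same sign, the operator $\varphi\mapsto\Delta_\varphi\varphi$ is the map $(A,B)\mapsto(X,Y)$ with $X = 24A^7B^{-6}+24A^4B^{-3}$ and $Y = 4A+24A^4B^{-3}+4A^{-2}B^3$. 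Two structural observations come first. Applying Lemma~\ref{scaling of G2 Laplacian} with $c=-1$ shows the $G_2$-Laplacian intertwines $\varphi\mapsto-\varphi$, which interchanges the two connected components $\{A,B>0\}$ and $\{A,B<0\}$ of the space of positive invariant $3$-forms; hence it suffices to treat $A,B>0$, where one reads off at once that $X,Y>0$, giving orientation preservation. The same lemma shows, and one checks by inspection, that $(A,B)\mapsto(X,Y)$ is positively homogeneous of degree one, so the problem splits into a radial and an angular part.

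For the angular part I would introduce the ratio $z = A^3/B^3\in(0,\infty)$ and the scale parameter $t = A^{-2}B^3\in(0,\infty)$; a short rearrangement gives $X = 24\,t\,z^2(z+1)$ and $Y = 4\,t\,(6z^2+z+1)$, so that along each ray $\{z=\mathrm{const}\}$ both $X$ and $Y$ are positive linear functions of $t$, with
\[
\frac{Y}{X} = f(z) := \frac{6z^2+z+1}{6z^2(z+1)} .
\]
The crux of the argument is that $f\colon(0,\infty)\to(0,\infty)$ is a strictly decreasing bijection: one has $f(z)\to+\infty$ as $z\to0^+$ and $f(z)\to0$ as $z\to\infty$, while the numerator of $f'(z)$, after clearing denominators, equals $-12z\,(3z^3+z^2+2z+1)$, which is negative for $z>0$. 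Granting this, a target positive $3$-form with coefficients $(X_0,Y_0)$, $X_0,Y_0>0$, determines $z_0 = f^{-1}(Y_0/X_0)$ uniquely; along the corresponding ray $X$ is a strictly increasing bijection of $t$ onto $(0,\infty)$, which pins down the remaining freedom, and then $Y=Y_0$ is automatic. This establishes that the $G_2$-Laplacian is a bijection on the component $\{A,B>0\}$, and reflecting handles $\{A,B<0\}$.

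To count fixed points I would use Remark~\ref{fixed points}: these correspond to $G_2$-Laplacian-invariant lines, i.e.\ rays with $(X,Y) = \lambda(A^3,B^3)$ for some $\lambda$. Eliminating $\lambda$ from~\eqref{Poisson eq for Sp(2)Sp(1)} gives $24AB^{-3} = 4AB^{-3}+4A^{-2}$, equivalently $B^3 = 5A^3$, i.e.\ $z = \tfrac15$; so there is exactly one invariant line (its two orientation halves being identified in the projectivisation), along which $\lambda>0$ by Remark~\ref{relation between eigenvalue and fixed points}. The scaling in Lemma~\ref{scaling of G2 Laplacian} then provides a unique positive rescaling on this line with eigenvalue $1$, and together with its orientation reversal this yields precisely the two fixed points $\pm\varphi_0$.

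I expect the sign of $f'$ — that is, checking that the numerator factors into a manifestly negative expression — to be the only genuinely delicate point; everything else is bookkeeping built around the degree-one homogeneity. An alternative would be to prove the map is a local diffeomorphism (nonvanishing Jacobian) together with a properness argument, but the ratio reduction above seems cleaner and mirrors the treatment of the $SU(4)$ case in Theorem~\ref{thm for SU(4)}.
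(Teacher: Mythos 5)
Your proof is correct and, for the bijectivity part, follows essentially the paper's strategy: both arguments use the scaling behaviour of the $G_2$-Laplacian to split the map into a radial part and a slope map, and both prove bijectivity by showing that the ratio $Y/X$ is a monotone bijection of the slope variable onto $(0,\infty)$ — your $f(z)$ is exactly the paper's ratio $\tfrac{t(t^2+t+6)}{6(t+1)}$ written in the reciprocal variable $z=1/t$, and your factorisation of the numerator of $f'$ plays the role of the paper's positivity check of the derivative. The genuine difference is in the fixed-point count. The paper works on the ray parametrisation $\Phi(t)=(1,t,\sqrt[3]{t^4})$, equates the image slope with $\sqrt[3]{t}$, and applies Descartes' rule of signs to the resulting degree-eight polynomial; you instead eliminate $\lambda$ directly from $(X,Y)=\lambda(A^3,B^3)$ in \eqref{Poisson eq for Sp(2)Sp(1)} and obtain the linear relation $B^3=5A^3$. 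Your route is cleaner and pins down the invariant ray explicitly; indeed, substituting $B^3=5A^3$ into \eqref{Poisson eq for Sp(2)Sp(1)} gives $\Delta_\varphi\varphi=\tfrac{144}{25}A^{-2}\varphi$, confirming it. Note also that since the coefficients of $\varphi$ in the basis $\{e^1\wedge e^{23},\,e^1\wedge\omega_1+V_R\}$ are $(A^3,B^3)$, the preserved-line condition along $\Phi(t)$ should read $Y/X=B^3/A^3=t$ rather than $\sqrt[3]{t}=b/a$; the paper's displayed equation uses the latter (parameter-space) slope, which still has exactly one positive root and so does not affect the count of two fixed points, but your $z=1/5$ is the correct location of the fixed ray. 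In short: correct, same skeleton for bijectivity, and a more direct (and arguably more accurate) treatment of the fixed points.
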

\begin{proof}
Observe that the image of a 3-form under the $G_2$-Laplacian is always a positive 3-form with the same orientation; therefore, it is an operator on the space $\mathbb{Z}_2 \times (\mathbb{R}^+)^2$.
In light of the scaling properties of this operator (Lemma \ref{scaling of G2 Laplacian}), it is enough to show that its projectivisation is a bijection.
Consider a map $\Phi:\mathbb{R}^+ \to \mathbb{Z}_2 \times (\mathbb{R}^+)^2\subseteq\mathbb{R}^2$ given by $t\mapsto (1,t,\sqrt[3]{t^4})$. Clearly, this map is a bijection onto every line of positive 3-forms. Using equation~\eqref{Poisson eq for Sp(2)Sp(1)}, we obtain
\begin{equation*}
    \Delta_{\Phi(t)}\Phi(t) = \begin{pNiceArray}{c}
   24(t^{-1}+1) \\
  4t+24+4t^2
\end{pNiceArray}.
\end{equation*}
Taking the quotient in \eqref{Poisson eq for Sp(2)Sp(1)} yields
\begin{equation*}
    \frac{Y}{X} = \frac{4t+24+4t^2}{24(t^{-1}+1)} = \frac{t(t^2+t+6)}{6(t+1)}.
\end{equation*}
Direct computations show that
\begin{align*}
    \bigg(\frac{t(t^2+t+6)}{6(t+1)}\bigg)' = \frac{t(2t+1) (t+1)+(t^2+t+6)}{6(t+1)^2}>0, &&
    \lim_{t\to0}\frac{t(t^2+t+6)}{6(t+1)} = 0, &&
    \lim_{t\to\infty}\frac{t(t^2+t+6)}{6(t+1)}=\infty.
\end{align*}
Altogether, this implies that the $G_2$-Laplacian is a bijection on lines, and hence a bijection on the whole space of positive 3-forms, as we wanted to show.

Since for every pair of fixed points we have a preserved line (see Remark \ref{fixed points}), one needs to solve the equation
\begin{equation*}
    \frac{t(t^2+t+6)}{6(t+1)} = \frac{\sqrt[3]{t^4}}{t}=\sqrt[3]{t}.
\end{equation*}
However, this equation is equivalent to the equation
\begin{equation*}
    t^2(t^2+t+6)^3-6^3(t+1)^3 = t^8 + 3 t^7 + 21 t^6 + 37 t^5 + 126 t^4 - 108 t^3 - 432 t^2 - 648 t - 216 = 0.
\end{equation*}
Since there is only one sign change, we can use the Descartes' rule of signs to deduce that there is only one positive root.
\end{proof}
\begin{cor}
    For every positive $Sp(2)\cdot Sp(1)$-invariant 3-form $\mu$ there is a unique positive $Sp(2)\cdot Sp(1)$-invariant solution $\varphi$ to the Poisson equation
    \begin{equation*}
        \Delta_\varphi\varphi=\mu.
    \end{equation*}
\end{cor}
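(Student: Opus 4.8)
The plan is to deduce this corollary directly from Theorem~\ref{thm for Sp(2)Sp(1)}, so the work is essentially bookkeeping. First I would recall, via Proposition~\ref{positive 3-forms for Sp(2)Sp(1)}, that every positive $Sp(2)\cdot Sp(1)$-invariant 3-form on $S^7$ has the shape $\varphi=A^3\,e^1\wedge e^{23}+B^3\sum_{i=1}^3 e^i\wedge\omega_1$ with $A$ and $B$ of the same sign, so that the space of such forms is identified with $\mathbb{Z}_2\times(\mathbb{R}^+)^2$; a prescribed positive invariant right-hand side $\mu$ is then just one point of this space. Next I would invoke Theorem~\ref{thm for Sp(2)Sp(1)}: the map $\varphi\mapsto\Delta_\varphi\varphi$ is an orientation-preserving bijection of this space onto itself. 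Surjectivity produces a positive $Sp(2)\cdot Sp(1)$-invariant $\varphi$ solving $\Delta_\varphi\varphi=\mu$ --- with the same orientation as $\mu$ --- and injectivity upgrades this to uniqueness within the class of positive invariant 3-forms.

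The one point I would take care to state explicitly is that the $G_2$-Laplacian is, by construction, only defined on $G_2$-structures (positive 3-forms), and that equation~\eqref{Poisson eq for Sp(2)Sp(1)} shows its image again lies in the positive cone with the same sign of $A$; hence the domain and codomain of the relevant self-map genuinely coincide, and Theorem~\ref{thm for Sp(2)Sp(1)} applies verbatim. I do not expect any real obstacle here: all the substance --- the monotonicity of $t\mapsto t(t^2+t+6)/\big(6(t+1)\big)$ on $(0,\infty)$, together with its limits $0$ and $\infty$, and the single sign change in the associated octic that pins down the fixed line --- has already been carried out in the proof of that theorem, and the corollary is merely its restatement in the language of the Poisson equation.
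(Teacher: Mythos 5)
Your proposal is correct and matches the paper's treatment: the corollary is an immediate consequence of Theorem~\ref{thm for Sp(2)Sp(1)}, with surjectivity of the $G_2$-Laplacian on positive $Sp(2)\cdot Sp(1)$-invariant 3-forms giving existence and injectivity giving uniqueness. Your extra remark that the image lands back in the positive cone (via equation~\eqref{Poisson eq for Sp(2)Sp(1)}) is exactly the observation made at the start of the paper's proof of that theorem, so nothing is missing.
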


For the case with $Sp(1)\cdot U(1)$ symmetry we have the following result.

\begin{theorem}\label{thm for Sp(2)U(1)}
    There is a neighbourhood around the set of $Sp(2)\cdot Sp(1)$-invariant positive 3-forms in the space of $Sp(2)\cdot U(1)$-invariant positive 3-forms on the 7-sphere where the $G_2$-Laplacian is a bijection onto its image.
\end{theorem}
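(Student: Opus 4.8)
The plan is to run the inverse function theorem around the submanifold $S\subset W$ of positive $Sp(2)\cdot Sp(1)$-invariant $3$-forms inside the open manifold $W$ of positive $Sp(2)\cdot U(1)$-invariant $3$-forms, parametrised as in Proposition~\ref{positive 3-forms for Sp(2)U(1)}. Here the $G_2$-Laplacian $F:=\Delta_{(-)}(-)$ is the rational map written out in the system~\eqref{Poisson eq for Sp(2)U(1)}, and $S$ is the closed $2$-dimensional submanifold of $W$ cut out by $\alpha\in\{0,\pi\}$ and $R=|B|$ (matching Propositions~\ref{positive 3-forms for Sp(2)Sp(1)} and~\ref{positive 3-forms for Sp(2)U(1)} via $\sum_i e^i\wedge\omega_i=e^1\wedge\omega_1+V_R$). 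First I would prove that $dF_p$ is invertible at every $p\in S$, and then upgrade this local statement to injectivity of $F$ on a full neighbourhood of $S$, using that $F|_S$ is already injective by Theorem~\ref{thm for Sp(2)Sp(1)}.

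For the Jacobian the key point is that in adapted coordinates both $S$ and its image are coordinate planes and that $F(S)\subseteq S$. On the source take $\alpha$ and $R-B$ as the two coordinates transverse to $S$; on the target, the $Sp(2)\cdot Sp(1)$-invariant $3$-forms are exactly the locus where the $V_I$-coefficient $Q\sin\beta$ and the combination $Q\cos\beta-Y$ both vanish. Since the Hodge Laplacian of an $Sp(2)\cdot Sp(1)$-invariant form is again $Sp(2)\cdot Sp(1)$-invariant, $F(S)\subseteq S$, so $dF_p$ ($p\in S$) is block triangular. Its tangential block is $d(F|_S)_p$, which is nonsingular: by Lemma~\ref{scaling of G2 Laplacian} $F|_S$ is homogeneous of degree one, and the proof of Theorem~\ref{thm for Sp(2)Sp(1)} shows the slope of the image ray is a strictly monotone function of the slope of the source ray, so $F|_S$ is a ray-transverse immersion, hence a diffeomorphism onto its image. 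The transverse block is $2\times2$; because $V_I$ enters~\eqref{Poisson eq for Sp(2)U(1)} only through $\sin\alpha$, and $Y,Q\cos\beta$ depend on $\alpha$ only through $\cos\alpha$, both off-diagonal entries vanish along $S$, so this block is diagonal with entries $\partial_R(Q\cos\beta-Y)$ and $\partial_\alpha(Q\sin\beta)$. A short computation from~\eqref{Poisson eq for Sp(2)U(1)} at $\alpha=0$, $R=B$ gives, with $u=A/B$, the value $\partial_R(Q\cos\beta-Y)=24\,u^{-2}\,(6u^{6}-u^{3}+1)>0$ (the quadratic $6v^{2}-v+1$ has negative discriminant), and $\partial_\alpha(Q\sin\beta)=-(8A^{-2}B^{2}+16A^{4}B^{-4})R\neq0$; hence $\det dF_p\neq0$ on all of $S$.

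By the inverse function theorem $F$ is then a local diffeomorphism near every point of $S$. To pass to injectivity on a single neighbourhood I would look at the closure in $W\times W$ of the double-point set $\{(\varphi_1,\varphi_2):\varphi_1\neq\varphi_2,\ \Delta_{\varphi_1}\varphi_1=\Delta_{\varphi_2}\varphi_2\}$: since $F|_S$ is injective and $dF$ is invertible along $S$, this closure is disjoint from $S\times S$, so a sufficiently thin tube $V$ around $S$ contains no double points, and on $V$ the map $F$ is injective and, being a local diffeomorphism, open — hence a bijection onto its open image. Because $S$ is non-compact (even after quotienting by the scaling action its projectivisation is an open half-line), the tube cannot be chosen of uniform radius; I would make this precise by descending to the projectivisation via Lemma~\ref{scaling of G2 Laplacian}, exhausting the half-line by compact arcs, and letting the tube radius decrease towards the two ends.

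I expect the genuine difficulty to lie in this last passage rather than in the Jacobian computation: once the block decomposition is in place, the transverse determinant is an elementary positivity check, but turning ``local diffeomorphism near $S$ together with $F|_S$ injective'' into ``injective on a neighbourhood of $S$'' really does require the non-compactness of $S$ to be handled, that is, excluding that $F$ applied to forms lying far from $S$ comes back to meet $F(S)$; the scaling symmetry is the tool that keeps this controlled.
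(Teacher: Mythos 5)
Your proposal is correct and follows the same strategy as the paper: show that the Jacobian of the $G_2$-Laplacian, written in the coordinates of the system \eqref{Poisson eq for Sp(2)U(1)}, is non-singular at every point of the $Sp(2)\cdot Sp(1)$-invariant locus $S$ (given by $\alpha\in\{0,\pi\}$, $R=|B|$) and invoke the inverse function theorem. Your organisation of the Jacobian is a little cleaner: the paper checks the directional derivatives $\partial_A$, $\partial_B+\partial_R$, $\partial_\alpha$, $\partial_B$ directly and argues linear independence by comparing coordinates of the output vectors, whereas you use $F(S)\subseteq S$ to get a block-triangular matrix whose transverse $2\times 2$ block is diagonal. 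The two computations agree: since $(\partial_B+\partial_R)(Q\cos\beta-Y)=0$ along $S$, your entry $\partial_R(Q\cos\beta-Y)=24u^{-2}(6u^{6}-u^{3}+1)$ is the negative of the paper's $\partial_B(Q\cos\beta-Y)$, and its non-vanishing is exactly the paper's condition $(12s^{3}-1)^{2}+23\neq 0$. The one substantive addition in your write-up is the final globalisation step: the paper stops at ``the Jacobi matrix is non-singular, use the inverse function theorem'', which by itself only yields a local diffeomorphism near each point of $S$, whereas the statement asserts injectivity on a single neighbourhood of the non-compact set $S$; your double-point-set/shrinking-tube argument, combined with the injectivity of $F|_S$ from Theorem~\ref{thm for Sp(2)Sp(1)} and the reduction to the projectivisation via Lemma~\ref{scaling of G2 Laplacian}, is the standard and correct way to close that gap. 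So your proof is, if anything, more complete than the paper's on that last point, while the Jacobian computation itself is equivalent.
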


\begin{proof}
    It is enough to show that on the subspace defined by $B=R$ and $\alpha=0$ the Jacobi matrix of the $G_2$-Laplace operator is non-singular and use the inverse function theorem. Because of Theorem \ref{thm for Sp(2)Sp(1)}, the Jacobi matrix has rank at least 2. Furthermore, when we consider the direction along $\alpha$, we obtain
    \begin{align*}
        \frac{\partial \Delta_\varphi(\varphi)}{\partial\alpha}\bigg|_{B=R,\alpha=0} = (0,0,0, -8A^{-2}B^3-16A^4B^{-3}),
    \end{align*}
    which is linearly independent from $\frac{\partial\Delta_\varphi\varphi}{\partial A}$ and $\left(\frac{\partial}{\partial B}+\frac{\partial}{\partial R}\right)\Delta_\varphi\varphi$ along $B=R$ and $\alpha=0$ since both of these have the last coordinate zero. Next, we find
    \begin{align*}
        &\frac{\partial \Delta_\varphi(\varphi)}{\partial B}\bigg|_{B=R,\alpha=0} \\
        &=(-48A^7B^{-7}-24A^4B^{-4}, -16AB^{-1}+20A^{-2}B^2+72A^4B^{-4}, 8AB^{-1}-72A^4B^{-4}-4A^{-2}B^2, 0).
    \end{align*}
    We will show that the middle two coordinates are never the same, which will imply linear independence from $\frac{\partial\Delta_\varphi\varphi}{\partial A}$, $\left(\frac{\partial}{\partial B}+\frac{\partial}{\partial R}\right)\Delta_\varphi\varphi$ and $\frac{\partial\Delta_\varphi\varphi}{\partial \alpha}$ since all of these have the middle two coordinates the same.
    It follows that the Jacobi matrix has full rank.
    Consider the parameter $s=AB^{-1}\neq 0$. The equality of the middle coordinates of $\frac{\partial\Delta_\varphi\varphi}{\partial B}$ is equivalent to the equation
    \begin{equation*}
        -16s + 20s^{-2} +72s^4 = 8s -72s^4 - 4s^{-2},
    \end{equation*}
    which is equivalent to
    \begin{equation*}
        (12s^3-1)^2 + 23 = 0.
    \end{equation*}
    However, this equation clearly has no solutions. This completes the proof.
\end{proof}

\begin{theorem}\label{thm:fixed points for Sp(2)Sp(1)}
    The $G_2$-Laplacian restricted on the space of $Sp(2)\cdot Sp(1)$-invariant positive 3-forms has eight fixed points that correspond to the nearly parallel $G_ 2$-structures and a continuous family of fixed points that are not nearly parallel $G_2$-structures.
\end{theorem}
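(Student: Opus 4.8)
The plan is to identify the fixed points of the $G_2$-Laplacian with its positive eigenforms, as in Remark~\ref{relation between eigenvalue and fixed points}, and then to split those eigenforms into the ones satisfying the nearly parallel equation $d\varphi=\tau_0*_\varphi\varphi$ and the ones that do not. I carry this out inside the four-parameter family \eqref{parametrisation for Sp(2)U(1) invariant 3-forms}, whose locus $\{\sin\alpha=0,\ B=R\}$ recovers the $Sp(2)\cdot Sp(1)$-invariant forms of Theorem~\ref{thm for Sp(2)Sp(1)}; the additional eigenforms are exactly what the surrounding computation \eqref{Poisson eq for Sp(2)U(1)} detects. Imposing $\Delta_\varphi\varphi=\lambda\varphi$ is the same as requiring $(X,Y,Q\cos\beta,Q\sin\beta)=\lambda(A^3,B^3,R^3\cos\alpha,R^3\sin\alpha)$ in \eqref{Poisson eq for Sp(2)U(1)}.

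First I use that $V_I$ is closed while $e^1\wedge e^{23},\,e^1\wedge\omega_1,\,V_R$ are coclosed (Remark~\ref{coclosed form are defined regardless of the metric}), so $\Delta_\varphi$ is block diagonal: a single eigenvalue on the $V_I$-line and the $3\times3$ block \eqref{G2-Laplacian on the coclosed part} on the coclosed part. The fourth equation of \eqref{Poisson eq for Sp(2)U(1)} then factors, giving $\sin\alpha\bigl(8A^{-2}B^{2}+16A^{4}B^{-4}+\lambda R^{2}\bigr)R^{-1}=0$; since a positive eigenform needs $\lambda>0$ (recall $\Delta_\varphi$ is positive semidefinite and $H^3(S^7)=0$, Remark~\ref{relation between eigenvalue and fixed points}), the second factor cannot vanish and we are forced into $\sin\alpha=0$, i.e. $\alpha\in\{0,\pi\}$. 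Hence every fixed point is coclosed.

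On the coclosed three-space the Laplacian is the square $\Delta_\varphi=(*_\varphi d)^2=:N^2$ of the first-order operator $N=*_\varphi d$, and the two notions separate cleanly: $\varphi$ is nearly parallel exactly when $N\varphi=\tau_0\varphi$ (applying $*_\varphi$ to $N\varphi=\tau_0\varphi$ returns $d\varphi=\tau_0*_\varphi\varphi$ because $*_\varphi^{2}=\mathrm{id}$ on $3$-forms), whereas $\varphi$ is a fixed point exactly when $N^2\varphi=\lambda\varphi$. The nearly parallel forms are therefore the eigenvectors of $N$; eliminating $\tau_0$ from the three scalar equations $N\varphi=\tau_0\varphi$, read off from the differentials of $e^1\wedge e^{23},\,e^1\wedge\omega_1,\,V_R$, collapses the system to a pair of polynomial relations in the ratios $B^3/A^3,\ R^3/A^3$ together with the sign $\cos\alpha=\pm1$. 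Solving these and pairing each positive solution with its two orientations (Remark~\ref{fixed points}) is expected to yield the eight fixed points; I verify positivity and record $\tau_0$ for each, and I check that one of them restricts to the $Sp(2)\cdot Sp(1)$-invariant eigenform $B^3=5A^3$ of Theorem~\ref{thm for Sp(2)Sp(1)}, as a consistency test.

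Finally, the continuous family is the set of eigenvectors of $N^2$ that are not eigenvectors of $N$. These occur on the locus where $N$ acquires a pair of eigenvalues $\mu,-\mu$, equivalently $\det N=\operatorname{tr}(N)\,c_1(N)$ for the middle invariant $c_1$ of $N$: there $N^2$ has a two-dimensional eigenspace, and a coclosed form lying in it but off the two $N$-eigenlines is a fixed point that fails $d\varphi=\tau_0*_\varphi\varphi$. The hard part is twofold. The first difficulty is establishing the exact count of eight: this means solving the polynomial system $N\varphi=\tau_0\varphi$ and bounding its positive, orientation-counted roots, presumably by a Descartes-type sign analysis as in the proof of Theorem~\ref{thm for Sp(2)Sp(1)}. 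The second, and I expect the genuine obstacle, is to show that the degeneracy locus above is a nonempty one-parameter family of positive $3$-forms, disjoint from the eight nearly parallel points, so that it really contributes coclosed fixed points that are not nearly parallel rather than collapsing into the discrete solutions.
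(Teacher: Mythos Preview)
Your plan mirrors the paper's proof closely: both reduce to the coclosed subspace via the fourth equation of \eqref{Poisson eq for Sp(2)U(1)}, write $\Delta_\varphi=N^2$ with $N=*_\varphi d$ on that subspace, and then separate eigenvectors of $N$ (the nearly parallel structures) from eigenvectors of $N^2$ that are not eigenvectors of $N$ (the continuous family). Your spectral criterion $\det N=\operatorname{tr}(N)\,c_1(N)$ for the latter is exactly the paper's ``the trace is an eigenvalue'' condition, and the paper resolves both hard parts you flag by explicit computation: for the nearly parallel case it normalises $R'=1$, reduces to a single-variable polynomial in $B$ (plus a separate equation at $B=1$) and counts real roots; for the continuous family it solves the determinant condition for $R'^6$ in terms of $A,B$, writes the condition ``$(A,B,R')^T$ lies in the two-dimensional $\mu^2$-eigenspace'' as an implicit curve, and checks nonemptiness by solving at $B=1$.

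One small correction: the $Sp(2)\cdot Sp(1)$-invariant fixed point is not given by $B^3=5A^3$; in the paper it arises as the unique positive root of a degree-$8$ polynomial (see the proof of Theorem~\ref{thm for Sp(2)Sp(1)}), so that consistency check should be dropped or reworked.
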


\begin{proof}
    Consider $Q=R, \alpha=\beta, A=X$ and $B=Y$ in the system \eqref{Poisson eq for Sp(2)U(1)}. From the fourth equation,
    \begin{equation*}
        R\sin(\alpha) = -8A^{-2}B^2R\sin(\alpha) - 16A^4B^{-4}R\sin(\alpha),
    \end{equation*}
    one concludes that either $R\sin(\alpha)=0$ or $-16Z^2-8Z^{-1}-1 = 0$, where $Z=\frac{A^2}{B^2}$. However, the latter equation has a real solution only for negative $Z$, which clearly never happens. Therefore, $R\sin(\alpha)=0$, and since $R>0$, we obtain that $R\cos(\alpha) = \pm R$.
    We will consider it as a new variable, i.e. ${R'=R\cos(\alpha) \in \mathbb{R}\backslash\{0\}}$.
    Altogether, this implies that every fixed point must be a coclosed form. Combined with Remark \ref{fixed points}, we are thus solving the eigenvalue problem for the operator given in equation \eqref{G2-Laplacian on the coclosed part}, that is,
    \begin{equation*}
        \left(
    \begin{pNiceArray}{ccc}
  A^5B^{-2}R'^{-4} & 0 & 0 \\
  0 & A^{-1}B^4R'^{-4} & 0\\
  0 & 0 & A^{-1}B^{-2}R'^2
\end{pNiceArray}
\begin{pNiceArray}{ccc}
  0 & -4 & -8 \\
  -2 & 2 & -4\\
  -2 & -2 & 0   
\end{pNiceArray}\right)^2 \begin{pNiceArray}{c}
    A\\
    B\\
    R'
\end{pNiceArray} = \lambda\begin{pNiceArray}{c}
    A\\
    B\\
    R'
\end{pNiceArray},
    \end{equation*}
    where $A,B,R'\in\mathbb{R}\backslash\{0\}$ and $A$ and $B$ have the same sign. Let us denote the matrix on the left-hand side by $\textbf{D}^2$. The operator $\textbf{D}=*_\varphi d$ is self-adjoint for the corresponding metric and hence is diagonalizable.
    Furthermore, since we are looking for the eigenvectors and not the eigenvalues, we can multiply both sides of the equation by $A^2B^4R'^8$ to obtain the equivalent problem
    \begin{equation*}
        \left(
    \begin{pNiceArray}{ccc}
  A^6 & 0 & 0 \\
  0 & B^6 & 0\\
  0 & 0 & A^6
    \end{pNiceArray}
    \begin{pNiceArray}{ccc}
  0 & -4 & -8 \\
  -2 & 2 & -4\\
  -2 & -2 & 0   
    \end{pNiceArray}\right)^2 \begin{pNiceArray}{c}
    A\\
    B\\
    R'
    \end{pNiceArray} = \lambda\begin{pNiceArray}{c}
    A\\
    B\\
    R'
    \end{pNiceArray}.
    \end{equation*}
    By an abuse of notation we are going to call the matrix being squared on the left-hand side $\textbf{D}$ as well.
    Note that every eigenvector of a matrix is also an eigenvector of the square of the matrix with the eigenvalue squared.
    We will distinguish two cases: either the matrix of the operator $\textbf{D}^2$(for fixed $A,B,R'$) has the same eigenvectors as the matrix $\textbf{D}$ (for the same $A,B,R'$), or it has strictly more of them. The first case then corresponds to finding a nearly parallel $G_2$-structure. Indeed, by definition, a nearly parallel $G_2$-structure is one satisfying $d\varphi=\tau_0 *\varphi$, where $\tau_0$ is some function. \vspace{5pt}\\
    \textbf{Case 1.} The eigenvectors of $\textbf{D}^2$ are the same as those of $\textbf{D}$. Therefore, we can solve the eigenvalue problem for the operator \textbf{D}, that is,
    \begin{equation*}
    \begin{pNiceArray}{ccc}
  A^6 & 0 & 0 \\
  0 & B^6 & 0\\
  0 & 0 & R'^6
\end{pNiceArray}
\begin{pNiceArray}{ccc}
  0 & -4 & -8 \\
  -2 & 2 & -4\\
  -2 & -2 & 0   
\end{pNiceArray} \begin{pNiceArray}{c}
    A\\
    B\\
    R'
\end{pNiceArray} = \lambda\begin{pNiceArray}{c}
    A\\
    B\\
    R'
\end{pNiceArray}.
    \end{equation*}
    Because the problem is scaling-invariant, we can assume $R'=1$ to obtain the equations
    \begin{align*}
        A^5(2B+4) = -\lambda/2, &&
        B^5(A-B+2) = -\lambda/2, &&
        A+B = -\lambda/2.
    \end{align*}
    The second and third equation can be rewritten as
    \begin{equation*}
        A(B^5-1) = B^6-2B^5+B.
    \end{equation*}
    For $B=1$ the first equation yields $A^5=\frac{A+1}{6}$, and that has only one positive real solution by the Descartes' rule of signs (we do not care about the negative ones since $B$ is positive). This produces two fixed points (cf. Remark \ref{fixed points}).
    Assuming $B\ne 1$, we have
    \begin{equation*}
        A = \frac{B^6-2B^5+B}{B^5-1},
    \end{equation*}
    and substituting into the equation $A^5(2B+4)=A+B$ we end up with
    \begin{equation*}
        \frac{(B^6-2B^5+B)^5(2B+4)}{(B^5-1)^5}=\frac{2B^6-2B^5}{B^5-1},
    \end{equation*}
    which is equivalent to
    \begin{equation*}
        (B^5-2B^4+1)^5(B+2)-(B^5-1)^4(B-1) = 0.
    \end{equation*}
    This polynomial has 4 real roots (one of which is 1).\vspace{5pt}\\
    \textbf{Case 2.} There are eigenvectors of $\textbf{D}^2$ that are not eigenvectors of \textbf{D}. Since both $\textbf{D}^2$ and $\textbf{D}$ are diagonalisable, this happens if and only if there is an eigenvalue $\lambda^2$ of $\textbf{D}^2$ with bigger multiplicity than the multiplicity of the eigenvalue $\lambda$ of $\textbf{D}$.
    This is the situation if and only  if there is a non-zero eigenvalue $\lambda$ of $\textbf{D}$ such that the negative $-\lambda$ is also an eigenvalue of $\textbf{D}$. Since we are dealing with $3\times3$ matrices, it is equivalent to the fact that the trace is an eigenvalue, which in turn is equivalent to
    \begin{align*}
    0 &= \det\begin{pNiceArray}{ccc}
        2B^6 & 4A^6 & 8A^6 \\
        2B^6 & 0 & 4B^6\\
        2R'^6 & 2R'^6 & 2B^6   
    \end{pNiceArray} = -16A^6B^{12} + 64 A^6B^6R'^6 - 16B^{12}R'^6.
    \end{align*}
    And so, we find that $4A^6\ge B^6$ and 
    \begin{equation*}
    R'^6 = \frac{A^6B^6}{4A^6-B^6}.
    \end{equation*}
    One can check that, with this equality the eigenvalues for the operator $\textbf{D}^2$ are
    \begin{equation*}
        \lambda_1 = 4B^6,\: \lambda_2=\lambda_3 = \frac{48A^{12}B^{12}}{4A^6-B^6},
    \end{equation*}
    with corresponding eigenvectors
    \begin{align*}
        v_1=\begin{pNiceArray}{c} -2 \\ \frac{B^6-2A^6}{A^6} \\ 1 \end{pNiceArray}, &&
        v_2=\begin{pNiceArray}{c} \frac{8A^6-2B^6}{B^6} \\ 0 \\ 1 \end{pNiceArray}, &&
        v_3=\begin{pNiceArray}{c} \frac{B^6-2A^6}{B^6} \\ 1 \\ 0 \end{pNiceArray}.
    \end{align*}
    Since the eigenvector must be $(A,B,R')^T$, we obtain the condition
    \begin{equation*}
        A=B\frac{B^6-2A^6}{B^6} + R'\frac{8A^6-2B^6}{B^6}.
    \end{equation*}
    Again, choosing $R'=1$ we obtain an implicit curve
    \begin{equation*}
        AB^6 -B^7+2A^6B-8A^6+2B^6=0,
    \end{equation*}
    where every solution satisfying $4A^6\ge B^6$ is an eigenvector for the $G_2$-Laplacian. When one solves for $B=1$, the condition is satisfied, and so the solutions form a non-empty continuous family. This completes the proof.
\end{proof}

\vspace{2mm}

Observe that for the symmetry $Sp(2)\cdot U(1)$, we cannot get the same results as in Theorems \ref{thm for Spin(7)}, \ref{thm for SU(4)} and \ref{thm for Sp(2)Sp(1)} because the image of the $G_2$-Laplacian is not contained in the space of positive 3-forms. Indeed, consider the Poisson equation \eqref{Poisson eq for Sp(2)U(1)} for any fixed $A, R$ and $\alpha$ with $B$ as a free parameter. Near infinity, we have
\begin{align*}
    X \sim -B^5, && Y \sim B^{11}.
\end{align*}
Therefore, for sufficiently large $B$ the first two coordinates of the image (i.e., $X$ and $Y$) have different signs, and hence the image of the $G_2$-Laplacian is not a positive 3-form.
However, near 0 we have
\begin{align*}
    X \sim B^{-4}, && Y \sim B^2.
\end{align*}
So, by continuity, there exists a positive 3-form, the image of which is a degenerate form.\\

\section{Numerics for $Sp(2)\cdot U(1)$-symmetry}

We have used the Python library \texttt{Matplotlib} to plot the image of the $G_2$-Laplacian in the case of $Sp(2)\cdot U(1)$ symmetry. This provides an illustration for the observations made at the end of Subsection~\ref{Sp(2)U(1) subsection} concerning the non-positivity of the image of the $G_2$-Laplacian. We have run a separate numerical experiment to demonstrate that this $G_2$-Laplacian is not injective. In this section, we describe the details of our experiments.

Because the operator respects scaling (see Lemma \ref{scaling of G2 Laplacian}), the injectivity on the space $\Lambda^3(\mathfrak{m})^{Sp(1)\cdot U(1)}$ is equivalent to the injectivity on the projectivisation $\mathbb{P}\Lambda^3(\mathfrak{m})^{Sp(1)\cdot U(1)}$. 
In order to establish non-injectivity on the projectivisation, we have substituted vectors of the form $(A,B,R,\alpha)=(1,B,R,0)$ and $(A,B,R,\alpha)=(1,B,R,\pi)$ into the system~\eqref{Poisson eq for Sp(2)U(1)}. These vectors form a plane in the space of invariant 3-forms parametrised as in Proposition~\ref{positive 3-forms for Sp(2)U(1)}. Normalising the output $(X,Y,Q\cos\beta,Q\sin\beta)$ to achieve $X=1$, we ensure that this output lies in the same plane. We have plotted the image of rays going from the origin in this plane. The result is shown in Figure 2.

\begin{figure}[h!]
\centering
    \includegraphics[width=15cm]{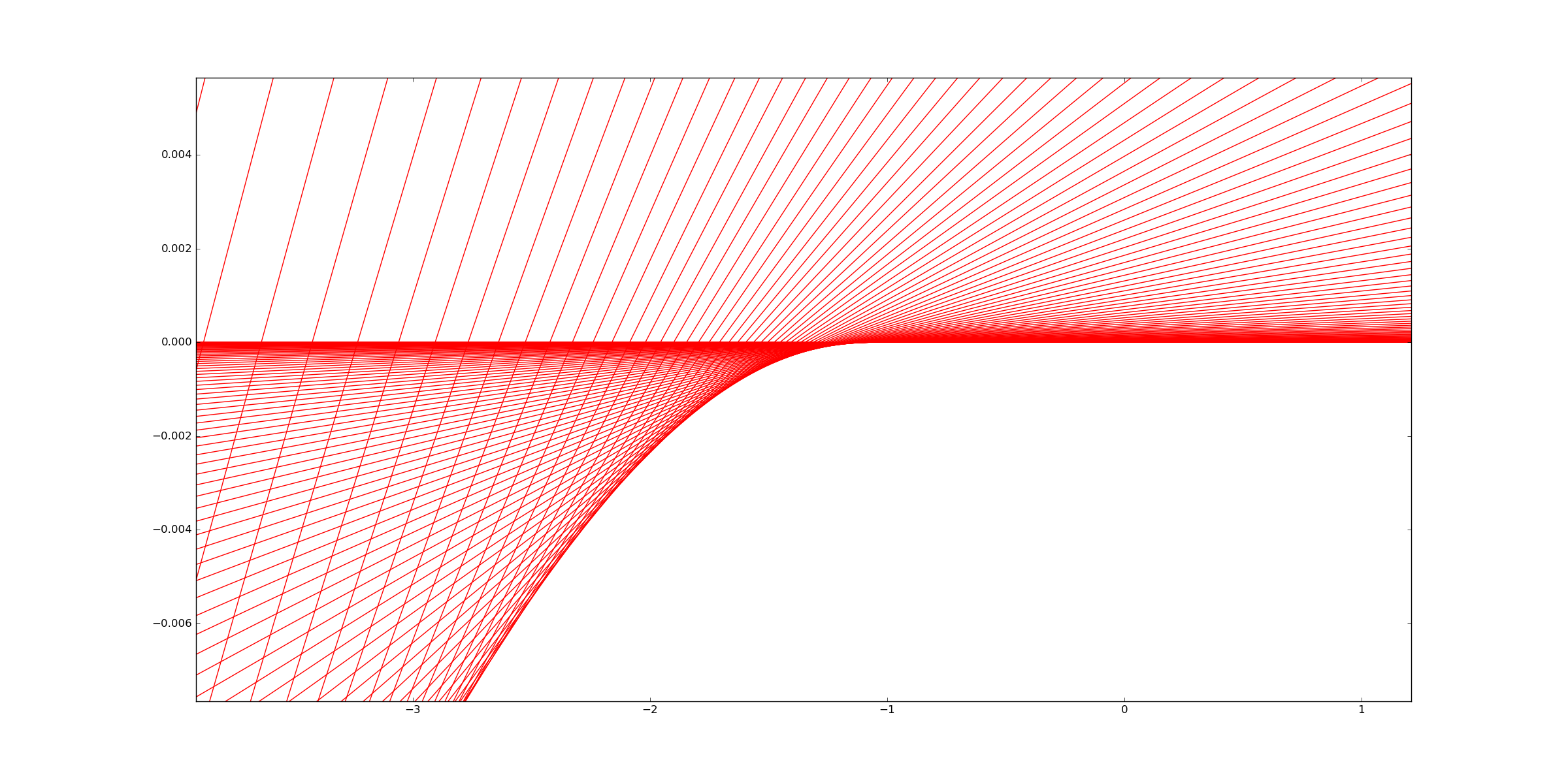}
\caption{Image of the $Sp(2)\cdot U(1)$-invariant rays from  the origin in the plane given by $A=1,\alpha\in\{0,\pi\}$ under the $G_2$-Laplacian}
\end{figure}

The intersections we observe are evidence of the $G_2$-Laplacian not being injective in general. Also, note that everything to the left from the $y$-axis ($x=0$) is a non-positive form.

Similarly, we have also plotted the image in the full projective space, that is, without restrictions on~$\alpha$, to investigate the surjectivity. We have plotted images of progressively bigger clouds of points as well as various clusters. The result can be seen in Figure 3. The top half of the space represents positive forms, and the lower half non-positive forms (i.e., $A$ and $B$ have different signs).

\begin{figure}[htb]
\centering
    \includegraphics[width=.3\textwidth]{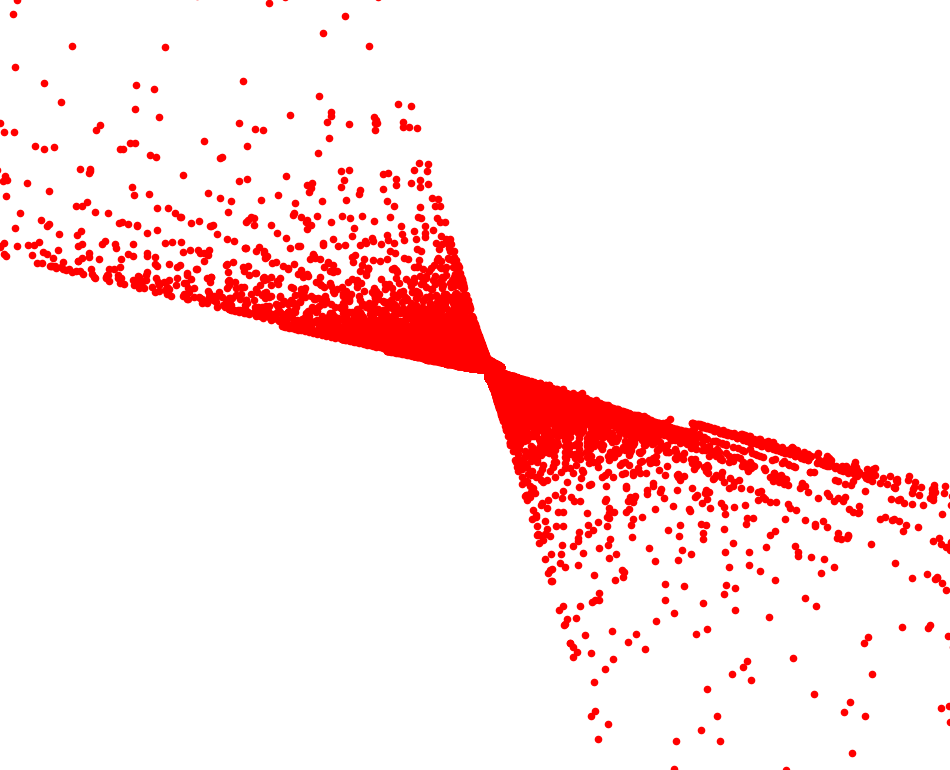}
    \includegraphics[width=.3\textwidth]{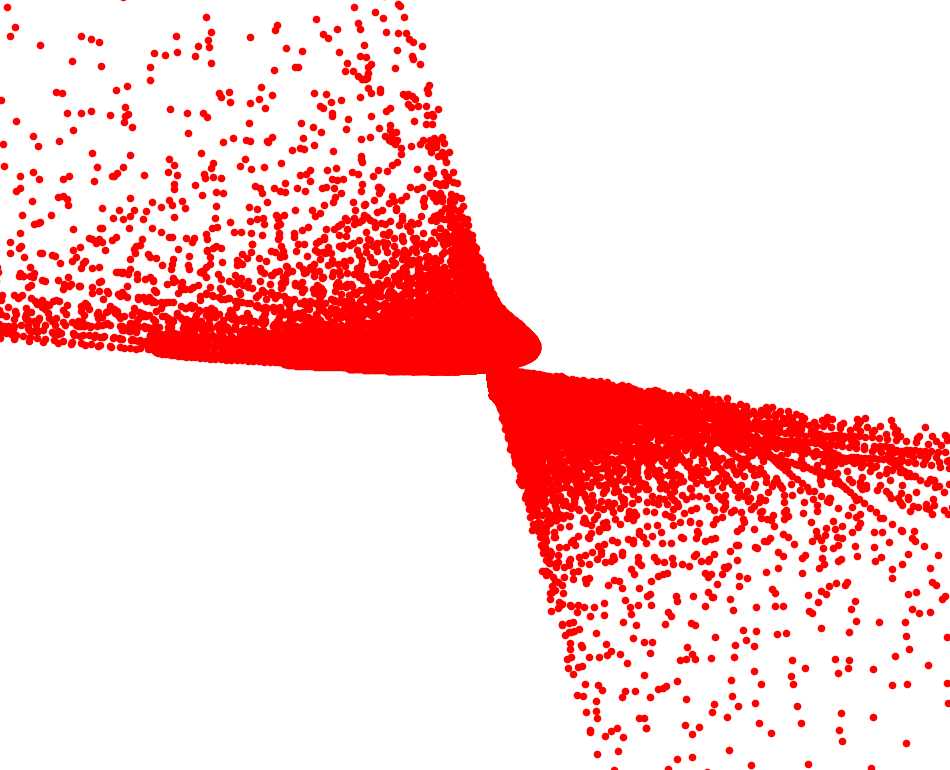}
    \includegraphics[width=.3\textwidth]{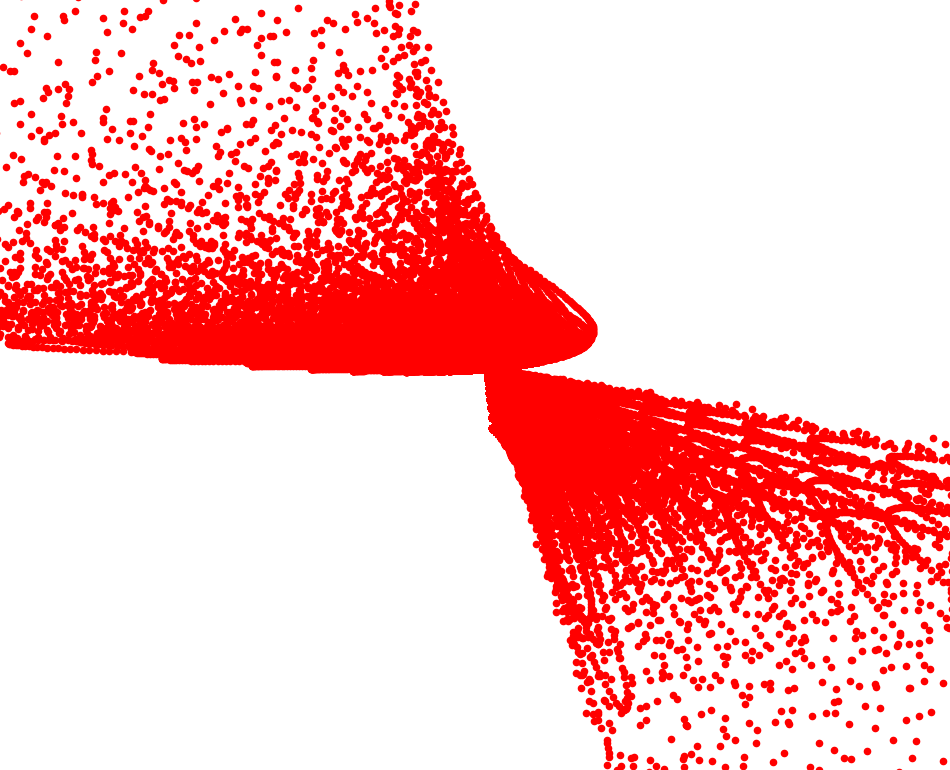}
\caption{Progressively bigger point clouds of the $(Sp(2)\cdot U(1))$-invariant forms mapped by the $G_2$-Laplacian}
\end{figure}

\begin{figure}[h!]
\centering
    \includegraphics[width=.3\textwidth]{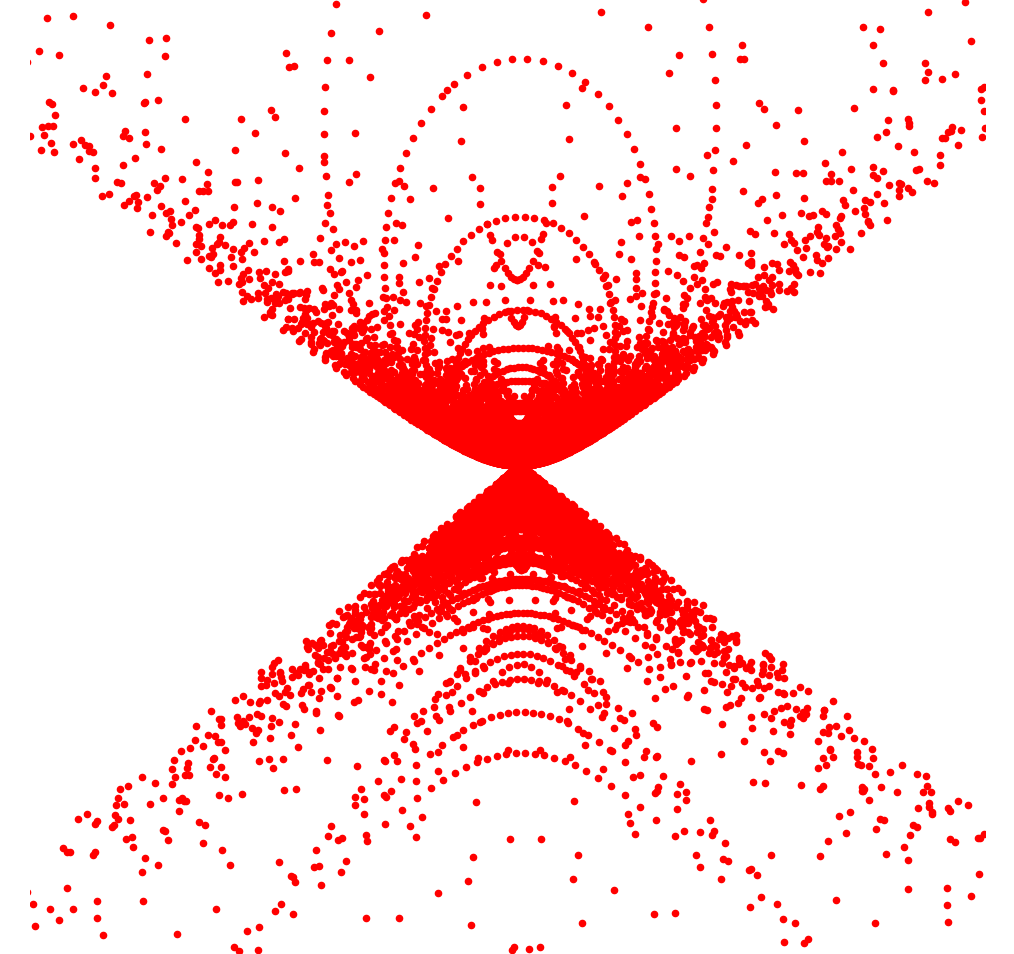}
    \caption{Point cloud of the $(Sp(2)\cdot U(1))$-invariant forms mapped by the $G_2$-Laplacian from the 'side'}
\end{figure}

The top ``cone" appears to be filling out the entire upper half-space as the cloud grows. However, the lower ``cone" is filling only the right side of the lower half-space. Furthermore, there seem to be lines that bound the image in the non-positive forms (see Figure 4 for a different perspective). This suggests that the $G_2$-Laplacian is not surjective onto the space of all $Sp(2)\cdot U(1)$-invariant 3-forms.

\bibliography{References}

\end{document}